\newcommand{\bbd}{{\ooalign{$d$\cr $\mkern6.8mul$}}}
\newcommand{\dblarrow}{\mathbin{\ooalign{$\scriptstyle\rightarrow$\cr\raise.75ex\hbox{$\scriptstyle\rightarrow$}}}}
\DeclareMathOperator*{\aslim}{as-lim}
\DeclareMathOperator*{\asliminf}{as-lim\,inf}
\DeclareMathOperator*{\aslimsup}{as-lim\,sup}
\DeclareMathOperator*{\earg}{\epsilon-arg}
\DeclareMathOperator*{\zarg}{-arg}
\DeclareMathOperator*{\elim}{e-lim}
\DeclareMathAlphabet{\mathcal}{OMS}{cmsy}{m}{n}
\journalname{Mathematical Programming}
\begin{document}

\title{Statistics with Set-Valued Functions\thanks{This work was supported in part by NSF Award CMMI-1450963.}
}
\subtitle{Applications to Inverse Approximate Optimization}


\author{Anil Aswani
}


\institute{A. Aswani \at
              Industrial Engineering and Operation Research, University of California, Berkeley, CA, USA\\
              \email{aaswani@berkeley.edu}           
}


\maketitle

\begin{abstract}
Much of statistics relies upon four key elements: a law of large numbers, a calculus to operationalize stochastic convergence, a central limit theorem, and a framework for constructing local approximations.  These elements are well-understood for objects in a vector space (e.g., points or functions); however, much statistical theory does not directly translate to sets because they do not form a vector space.  Building on probability theory for random sets, this paper uses variational analysis to develop operational tools for statistics with set-valued functions.  These tools are first applied to nonparametric estimation (kernel regression of set-valued functions).  The second application is to the problem of inverse approximate optimization, in which approximate solutions (corrupted by noise) to an optimization problem are observed and then used to estimate the amount of suboptimality of the solutions and the parameters of the optimization problem that generated the solutions.  We show that previous approaches to this problem are statistically inconsistent when the data is corrupted by noise, whereas our approach is consistent under mild conditions.

\keywords{set-valued functions \and statistics \and inverse optimization}
\end{abstract}

\section{Introduction}

While statistical theory is well-developed for problems concerning (single-valued) functions \cite{bickel2006,van2000}, there has been less work on statistics with sets or set-valued functions.  Most attention in statistics on sets has been focused on the problem of estimating a single set under different measurement models \cite{devroye1980,geffroy1964,guntuboyina2012,korostelev1995,patschkowski2016,renyi1963,scholkopf2001}.  The problem of estimating set-valued functions is less well studied, though it has potential applications in varied domains including healthcare, robotics, and energy.  For instance, we study in this paper the problem of inverse approximate optimization, where approximate solutions (corrupted by noise) to a parametric optimization problem are observed and then used to estimate the amount of suboptimality of the solutions and the parameters that generated the solutions.  Inverse approximate optimization can be used to construct predictive models of human behavior and decision-making, where the explicit model is that an individual makes decisions by approximately solving an optimization problem.  Statistical estimation in this context could be used to quantify the tradeoffs made by a particular individual between competing objectives, as well as quantify the predictability of the decision-making process.  This particular problem of inverse approximate optimization is related to the broader topic of statistics with set-valued functions because the solution mapping of an (even strictly convex) optimization problem becomes a set when suboptimality of solutions is allowed.  Thus a framework for statistics with set-valued functions is needed to study such problems.

A substantial impediment to studying such estimation problems is the lack of statistical tools for random sets and set-valued functions, and two technical issues prevent the use of existing tools.  The first is that most statistical theory assumes objects belong to a vector space, which is the case for points and functions.  But sets do not form a vector space, and so existing statistical theory cannot be used.  This is a fundamental difficulty, and even the usual notion of expectation does not apply to sets \cite{molchanov2006}.  The second is that most statistical theory has been developed by using metrics and distance functions to derive results. But analyzing sets using distances is difficult, and most analysis tools and results for sets do not use this approach \cite{berge1963,rockafellar2009}.

Arguably the most natural approach to statistics with random sets is to define a family of sets parametrized by a random vector, and then perform standard statistical analysis with respect to this parametrization.  However it is not clear without further analysis whether stochastic convergence of the estimated parameters implies stochastic convergence of the corresponding set estimates.  We study this question in a more general framework and give a counterexample to demonstrate how parameter convergence does not always imply set convergence.  Moreover, the parametrization approach does not lead to a useful definition for the expectation of random sets \cite{molchanov2006}; the reason is that the expectation of the parameters does not characterize the expectation of the set in a way in that ensures the law of large numbers holds.

One goal of this paper is to establish tools for statistics with set-valued functions, and this requires understanding four main ingredients: a law of large numbers, a calculus to operationalize stochastic convergence, a central limit theorem, and tools for constructing local approximations.  Probability theory for random sets \cite{molchanov2006} provides an expectation for random sets \cite{aumann1965,kudo1953}, a law of large numbers \cite{artstein1975}, and a central limit theorem \cite{weil1982}.  Here we use variational analysis \cite{rockafellar2009} to advocate a notion of local approximation for set-valued functions, and to develop results that allow us to interpret stochastic convergence and expectations of random sets as operators.

The paper begins by describing our notation and providing some useful definitions related to set-valued functions.  We focus in this paper on almost sure (a.s.) convergence because the corresponding definitions and approach most clearly demonstrate the tight link between variational analysis and statistics.  Defining set convergence in probability requires metrization, which partially obscures the relationship to variational analysis.  We also focus on Lipschitz continuity for set-valued functions because we advocate using this concept as a notion of local approximation for set-valued functions.  The utility of this approach is displayed later in the paper when we use Lipschitz continuity as a replacement for differentiability when proving a Delta method-like result and proving statistical consistency of a kernel regression estimator.

The next section shows how to interpret stochastic convergence and expectation of random sets as operators.  We study the limit of sequences of sets under different set operations, after proving a set-based generalization of the continuous mapping theorem \cite{bickel2006} from statistics.  Then we study the expectation of random sets under various set operations.  Standard proofs about the properties of the expectation of random variables do not extend because the expectation of a random set cannot be computed by integration.  This means properties like distribution of expectation under independence of the product of a random matrix with a random set or Jensen's inequality have not been previously established, and we prove such results.  We conclude by reviewing a law of large numbers and a central limit theorem for random sets.

Another goal of this paper is to study two problems of estimating set-valued functions, and through the process of analyzing these problems we demonstrate the utility of our tools for statistics with set-valued functions.  The first problem we study is estimating a set-valued function using noisy measurements of the set.  We propose a kernel regression estimator that can be interpreted as a generalization of methods for functions \cite{aswani2011,aswani2013,bickel1982,noda1976,wand1994}.  The key step in proving statistical consistency is using Lipschitz continuity of the set-valued function to construct local approximations.  We show that statistical consistency follows by combining our results on stochastic convergence with convergence bounds on (vector-valued) random variables. 

The second problem we study is inverse approximate optimization, where noisy measurements of approximate solutions to an optimization problem are used to estimate the suboptimality of the solutions and the parameters of the optimization problem.  In contrast, past work on inverse optimization assumes no noise \cite{ahuja2001,chan2014,esfahani2015} or exact solutions \cite{aswani2015,bertsimas2015,keshavarz2011}.  We develop a method for inverse approximate optimization and prove its statistical consistency using stochastic epi-convergence \cite{aswani2011,dupacova1988,geyer1994,knight2000,lachout2005,salinetti1986}.  Combining with our results on stochastic convergence and results on the continuity of solutions to optimization problems \cite{rockafellar2009,royset2016b} shows our method consistently estimates the (set-valued) approximate solution mapping that generates the data.

We conclude by examining extensions of the problem of inverse approximate optimization, as well as discussing related open questions about statistics with set-valued functions.  In particular, we describe how some extensions lead to formulations of optimization problems with structures (e.g., objective functions that are integrals whose domain of integration depends on the decision variable) that have not been well-studied from the perspective of numerical optimization.  Performing statistics with sets and set-valued functions also leads to questions about the design of numerical representations of sets.  We argue that further study of statistics with set-valued functions will require developing new numerical methods and optimization theory.

\section{Preliminaries}

This section presents the notation used in this paper, as well as several useful concepts from variational analysis.  Most of the variational analysis definitions are from \cite{rockafellar2009}. The definition of set-valued set functions is from \cite{matheron1975}, and we use the definitions of the Minkowski set operations from \cite{schneider1993}.  We abbreviate \emph{almost surely} using a.s.

\subsection{Notation}

Let $\mathcal{F}(E)$ be the space of closed subsets of $E$, and let $\mathcal{K}(E)$ be the space of compact subsets of $E$.  We will focus on cases where $E$ is a Euclidean space, and so will use the notation $\mathcal{F},\mathcal{K}$ to refer to the corresponding spaces.  Clearly $\mathcal{F}\supset\mathcal{K}$ by definition.

Suppose $C,D$ are sets and $\Psi$ is a matrix or scalar. We use the set notation: $C\cup D$ is the union of $C,D$; $C\cap D$ is the intersection of $C,D$; $C \subseteq D$ denotes that $C$ is a subset of $D$; $C \supseteq D$ denotes that $C$ is a superset of $D$; $\mathrm{cl}(C)$ is the closure of $C$; $\mathrm{co}(C)$ is the convex hull of $C$; $C^{\ \mathsf{c}}$ is the complement of $C$; $\partial C$ is the boundary of $C$; $C\oplus D = \{c + d : c\in C, d\in D\}$ is the Minkowski sum of $C,D$; $C\ominus D = \{x : x\oplus D\subseteq C\}$ is the Minkowski difference of $C,D$; $\Psi\cdot C = \{\Psi\cdot c : c\in C\}$; and $\Psi^{-1}C = \{\Psi^{-1}\cdot c : c\in C\}$.


\subsection{Limit Definitions and Set-Valued Mappings}

The outer limit of the sequence of sets $C_n$ is defined as
\begin{equation}
\textstyle\limsup_n C_n = \{x : \exists n_k \text{ s.t. } x_{n_k} \rightarrow x \text{ with } x_{n_k}\in C_{n_k}\},
\end{equation}
and the inner limit of the sequence of sets $C_n$ is defined as
\begin{equation}
\textstyle\liminf_n C_n = \{x : \exists x_n \rightarrow x \text{ with } x_n\in C_n\}.
\end{equation}
The outer limit consists of all the cluster points of $C_n$, whereas the inner limit consists of all limit points of $C_n$.  The limit of the sequence of sets $C_n$ exists if the outer and inner limits are equal, and we define that $\textstyle\lim_n C_n := \limsup_n C_n = \liminf_n C_n$.

Let $\overline{\mathbb{R}} = [-\infty,\infty]$ denote the extended real line.  A sequence of extended-real-valued functions $f_n : X\rightarrow\overline{\mathbb{R}}$ is said to epi-converge to $f$ if at each $x\in X$ we have
\begin{equation}
\begin{aligned}
\begin{cases}
\lim\inf_n f_n(x_n)\geq  f(x) & \text{for every sequence } x_n\rightarrow x\\
\lim\sup_n f_n(x_n)\leq f(x) &\text{for some sequence }x_n\rightarrow x
\end{cases}
\end{aligned}
\end{equation}
The notion of epi-convergence is so-named because it is equivalent to set convergence of the epigraphs of $f_n$, meaning that epi-convergence is equivalent to the condition $\lim_n \{(x,\alpha)\in X\times\mathbb{R} : f_n(x) \leq \alpha\} = \{(x,\alpha)\in X\times\mathbb{R} : f(x) \leq \alpha\}$.

A set-valued set function $G : V \Rightarrow U$ assigns to each set $S \subseteq V$ a set $G(S) \subseteq U$.  The outer limit of $G$ at the set $\overline{S}\in V$ is defined as
\begin{equation}
\limsup_{S\rightarrow\overline{S}} G(S) = \{u : \exists S_n \rightarrow \overline{S} \text{ s.t. } u_n \rightarrow u \text{ with } S_n\subseteq V, u_n\in G(S_n)\},
\end{equation}
and the inner limit of $G$ at the set $\overline{S}\subseteq V$ is defined as
\begin{equation}
\liminf_{S\rightarrow\overline{S}} G(S) = \{u : \forall S_n \rightarrow \overline{S},\ \exists u_n \rightarrow u \text{ with } S_n\subseteq V, u_n\in G(S_n)\}.
\end{equation}
The intuition is similar to the notions for sequences of sets.  The set-valued set function $G$ is outer semicontinuous (osc) at $\overline{S}$ if $\limsup_{S\rightarrow\overline{S}} G(S) \subseteq G(\overline{S})$, and $G$ is inner semicontinuous (isc) at $\overline{S}$ if $\liminf_{S\rightarrow\overline{S}} G(S) \supseteq G(\overline{S})$.  The set-valued set function $G$ is continuous at $\overline{S}$ when it is both osc and isc, that is when $\lim_{S\rightarrow\overline{S}}G(S) = G(\overline{S})$.

Variational analysis typically uses set-valued functions, rather than set-valued set functions. A set-valued function $F : X \dblarrow U$ assigns to each point $x \in X$ a set $F(x) \subseteq U$.  Outer limits, inner limits, outer semicontinuity, inner semicontinuity, and continuity are defined as above but with points replacing sets in the domain.  Moreover, a set-valued function applied pointwise to sets is an osc, isc, continuous set-valued set function whenever the set-valued function is osc, isc, continuous, respectively.

\subsection{Probability Definitions and Stochastic Convergence}

Let $(\Omega, \mathfrak{F}, \mathbb{P})$ be a complete probability space, where $\Omega$ is the sample space, $\mathfrak{F}$ is the set of events, and $\mathbb{P}$ is the probability measure.  A map $S : \Omega\rightarrow\mathcal{F}$ is a random set if $\{\omega : S(\omega) \in\mathcal{X}\}\in\mathfrak{F}$ for each $\mathcal{X}$ in the Borel $\sigma$-algebra on $\mathcal{F}$ \cite{molchanov2006}.  Like the usual convention for random variables, we notationally drop the argument for a random set.  

When discussing samples for estimation, we use the convention that capital letters denote random variables, and lowercase letters denote measured data. Also, we use the notation $U(a,b)$ to specify a uniform distribution with support $[a,b]$.

We next define almost sure stochastic convergence of random sets.  The notation $\aslimsup_n C_n \subseteq C$ denotes $\mathbb{P}(\limsup_n C_n\subseteq C) = 1$, the notation $\asliminf_n C_n \supseteq C$ denotes $\mathbb{P}(\liminf_n C_n\supseteq C) = 1$, and the notation $\aslim_n C_n = C$ denotes $\mathbb{P}(\lim_n C_n = C) = 1$.  Note $\aslimsup_n C_n \subseteq C$ and $\asliminf_n C_n \supseteq C$ if and only if $\aslim_n C_n = C$, since a countable intersection of almost sure events occurs almost surely.

\subsection{Distances and Lipschitz Continuity}

\label{sec:dlc}

Let $d(x,C) = \inf_{y\in C}\|x-y\|$ and $d^2(x,C) = \inf_{y\in C}\|x-y\|^2$ be the distance and squared distance, respectively, from a point $x$ to set $C$.  The support function of $C$ is $h(x,C) = \sup_{y\in C} x^\mathsf{T} y$.  We also define the indicator function $\delta(x,C)$ to equal $0$ when $x\in C$ and $+\infty$ when $x\notin C$.  The (integrated) set distance between $C$ and $D$ is defined as $\bbd(C,D) = \int_0^\infty \bbd_r(C,D)e^{-r}dr$, where the pseudo-distance between sets $C$ and $D$ is given by $\bbd_r(C,D) = \max_{\|x\|\leq r}\big|d(x,C) - d(x,D)\big|$. Note $\bbd_r(\{x\},C)\ne d(x,C)$ for all $r$.  The integrated set distance $\bbd$ is a metric that characterizes the convergence defined earlier for sets in $\mathcal{F}$, and the Pompeiu-Hausdorff distance $\bbd_\infty$ is a metric that characterizes the convergence defined earlier for sets in $\mathcal{K}$.  Since these metrics are complex, the sequence characterization of convergence is arguably more natural for sets.

One exception to this statement is in defining Lipschitz continuity for set-valued functions.  A set-valued function $F : X \dblarrow U$ is Lipschitz continuous on $X$ with constant $\kappa\in\mathbb{R}_+$ if it is nonempty, closed-valued and such that
\begin{equation}
F(x') \subseteq F(x) + \kappa\|x'-x\|\mathbb{B}\  \text{ for } x,x'\in X,
\end{equation}
where $\mathbb{B} = \{u : \|u\|\leq 1\}$ is the unit ball.  A set-valued set function $G : V \Rightarrow U$ is Lipschitz continuous on $V$ with constant $\kappa\in\mathbb{R}_+$ if it is nonempty, closed-valued and 
\begin{equation}
G(S') \subseteq G(S) + \kappa\bbd_\infty(S',S)\mathbb{B}\  \text{ for } S,S'\subseteq V \text{ with } S,S' \in\mathcal{K}.
\end{equation}
We will make use of Lipschitz continuity as a zeroth-order local approximation.

\section{Mathematical Tools for Statistics with Set-Valued Functions}

This section develops mathematical tools that allow us to interpret stochastic convergence and the expectation of random sets as operators.  We prove results on the limit of sequences of sets under different set operations, define an expectation for random sets, and then derive results about the behavior of this expectation under different set operations.  We conclude this section by briefly summarizing a law of large numbers and a central limit theorem for random sets.

\subsection{Stochastic Limit Theorems}


Our reason for considering set-valued set functions is this allows us to more precisely generalize the classical \emph{continuous mapping theorem} of statistics \cite{bickel2006} to mappings applied to sequences of sets.  Because semicontinuity is an important aspect of set convergence, a generalization that considers semicontinuity leads to a richer set of results than simply considering continuity.

\begin{theorem}[Semicontinuous Mapping Theorem]
\label{thm:smt}
Let $G$ be a set-valued set function, and suppose $\aslim_n C_n = C$.  There are three cases:
\begin{enumerate}[$\mathrm{(\mathtt{\alph*})}$, leftmargin=2.5em]
\item If $G$ is osc at $C$, then $\aslimsup_n G(C_n) \subseteq G(C)$.
\item If $G$ is isc at $C$, then $\asliminf_n G(C_n) \supseteq G(C)$.
\item If $G$ is continuous at $C$, then $\aslim_n G(C_n) = G(C)$.
\end{enumerate}
\end{theorem}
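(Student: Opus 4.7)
The plan is to reduce each of (a), (b), (c) to a pointwise statement on the probability-one event where sample-wise set convergence holds, and then invoke the respective semicontinuity definition exactly as stated. Let $\Omega_0 = \{\omega : \lim_n C_n(\omega) = C(\omega)\}$; by hypothesis $\mathbb{P}(\Omega_0) = 1$. Everything below is carried out for a fixed $\omega \in \Omega_0$, and the conclusions then hold almost surely.

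For part (a), I would start from an arbitrary $u \in \limsup_n G(C_n(\omega))$. By the sequence-of-sets definition of outer limit, there is a subsequence $n_k$ and points $u_{n_k} \in G(C_{n_k}(\omega))$ with $u_{n_k} \to u$. Since $C_n(\omega) \to C(\omega)$, any subsequence also converges, so $C_{n_k}(\omega) \to C(\omega)$. This realizes $u$ as an element of the outer limit of the set-valued set function $G$ at $C(\omega)$, namely $u \in \limsup_{S\to C(\omega)} G(S)$. The osc hypothesis yields $u \in G(C(\omega))$, giving (a).

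For part (b), I would start from $u \in G(C(\omega))$. By isc of $G$ at $C(\omega)$, we have $u \in \liminf_{S\to C(\omega)} G(S)$. The definition of the inner limit of $G$ quantifies over \emph{every} sequence $S_n \to C(\omega)$; specializing to the particular sequence $S_n = C_n(\omega)$, which does converge to $C(\omega)$ since $\omega \in \Omega_0$, produces $u_n \in G(C_n(\omega))$ with $u_n \to u$. Hence $u \in \liminf_n G(C_n(\omega))$, giving (b). Part (c) is immediate from (a) and (b), since the intersection of two probability-one events still has probability one, and $\limsup \subseteq G(C) \subseteq \liminf$ forces equality of the sequence limits.

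The argument is essentially bookkeeping, so the only real care needed is to make sure the two notions of outer/inner limit (one for sequences of sets, one for set-valued set functions) are stitched together correctly. The subtle point is the direction of the quantifier in the inner-limit definition for $G$: it is a universal quantifier over sequences $S_n \to C(\omega)$, which is precisely what lets us plug in our specific random sequence $C_n(\omega)$ in step (b). A secondary nuance is that ``$G$ osc/isc at $C$'' when $C$ is random should be read as holding at $C(\omega)$ for $\omega$ in a probability-one event; intersecting this event with $\Omega_0$ (still probability one) keeps the pointwise argument valid. No measurability concerns arise beyond those already built into the definitions of $\aslimsup$ and $\asliminf$.
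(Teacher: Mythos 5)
Your proof is correct and follows essentially the same route as the paper's: restrict to the probability-one event where $\lim_n C_n(\omega) = C$ holds, apply the deterministic osc/isc implications there, and combine the two a.s.\ events for part (c). The only difference is that you explicitly unpack why the sequence-of-sets outer/inner limits of $G(C_n(\omega))$ embed into the set-valued-set-function limits at $C$ (via the subsequence argument for (a) and the universal quantifier for (b)), a step the paper's proof asserts directly from the definitions.
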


\begin{proof}
The definition of osc (isc) means $\lim_n C_n = C$ implies $\limsup_n G(C_n) \subseteq G(C)$ ($\liminf_n G(C_n) \subseteq G(C)$).  This means $\mathbb{P}(\limsup_n G(C_n) \subseteq G(C)) \geq \mathbb{P}(\lim_n C_n = C) = 1$ ($\mathbb{P}(\liminf_n G(C_n) \supseteq G(C)) \geq \mathbb{P}(\lim_n C_n = C) = 1$), which shows the first two cases.  The third case follows from the first two cases by recalling that continuity at $C$ is equivalent to being both osc and isc at $C$.\qed
\end{proof}

\begin{remark}
One consequence is that the set-valued function $S(\theta)$ parametrized by $\theta$ has the behavior that $\aslim_n\theta_n=\theta_0$ implies $\aslim_nS(\theta_n)= S(\theta_0)$ only when the set is continuous with respect to the parametrization.  For example, consider $S(\theta) = \{1\}$ if $\theta > 0$, $S(\theta) = \{-1\}$ if $\theta < 0$, and $S(\theta) = [-1,1]$ if $\theta = 0$.  If $\theta_n = 1/n$, then $S(\theta_n) \equiv \{1\}$ and so $\aslim_n S(\theta_n) = \{1\}$.  But $\aslim_n\theta_n = 0$ and $S(0) = [-1,1]$.
\end{remark}

As is customary in statistics, we immediately get some useful corollaries to our semicontinuous mapping theorem by applying the theorem to specific mappings.  Our first corollary applies the semicontinuous mapping theorem to set operations like unions and intersections of sets, the boundary of sets, the convex hull of sets, etc.

\begin{corollary}
Let $C_n,D_n\in\mathcal{F}$ be almost surely convergent sequences of sets (i.e., $\aslim_n C_n = C$ and $\aslim_n D_n = D$).  Then we have:
\begin{enumerate}[$\mathrm{(\mathtt{\alph*})}$, leftmargin=2.5em]
\item $\aslim_n (C_n\cup D_n) = C\cup D$
\item $\aslimsup_n (C_n\cap D_n) \subseteq C\cap D$
\item $\asliminf_n \mathrm{cl}(C_n^{\ \mathsf{c}})\supseteq\mathrm{cl}(C^\mathsf{c})$
\item $\asliminf_n\partial C_n \supseteq \partial C$
\item $\asliminf_n \mathrm{co}(C_n) \supseteq \mathrm{co}(C)$
\item $\aslim_n \mathrm{co}(C_n) = \mathrm{co}(C)$, when there is a deterministic $C_0\in\mathcal{K}$ so $C_n \subseteq C_0$ a.s.
\end{enumerate}
\end{corollary}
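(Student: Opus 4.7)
The overall strategy is to apply the Semicontinuous Mapping Theorem (Theorem~\ref{thm:smt}) by identifying each set operation as a set-valued set function whose semicontinuity we must verify. For the binary operations in parts (a) and (b), I would first note that the joint event $\{\lim_n C_n = C\}\cap\{\lim_n D_n=D\}$ still has probability one, so it suffices to prove the corresponding deterministic statements about convergent sequences and then invoke Theorem~\ref{thm:smt} in a product setting. In each part, once the appropriate semicontinuity (osc, isc, or both) is established, the conclusion is immediate.

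For (a), I would verify directly from the sequence characterization that union is continuous: if $x\in C\cup D$ lies (WLOG) in $C$, then isc of $C_n$ gives $x_n\in C_n\subseteq C_n\cup D_n$ with $x_n\to x$; and any cluster point of a sequence $x_{n_k}\in C_{n_k}\cup D_{n_k}$ has a sub-subsequence lying entirely in one of the two families, hence clusters in either $C$ or $D$. For (b), intersection is osc by inspection: $x_{n_k}\in C_{n_k}\cap D_{n_k}$ with $x_{n_k}\to x$ forces $x\in \limsup_n C_n \cap \limsup_n D_n = C\cap D$.

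For (c) and (d), the operations $S\mapsto\mathrm{cl}(S^{\mathsf{c}})$ and $S\mapsto\partial S$ are both isc on $\mathcal{F}$. A direct argument for (c) uses that any $y\in C^{\mathsf{c}}$ (which is open, since $C$ is closed) lies outside $C_n$ for all large $n$ --- otherwise $y$ would cluster into $\limsup_n C_n=C$ --- so $y\in\mathrm{cl}(C_n^{\mathsf{c}})$ eventually; a diagonal argument then produces the required approximating sequence for any $x\in\mathrm{cl}(C^{\mathsf{c}})$. For (d), given $x\in\partial C$, isc of $C_n$ gives $a_n\in C_n$ with $a_n\to x$, while the argument just used produces $y$ arbitrarily close to $x$ with $y\in C_n^{\mathsf{c}}$ for all large $n$; closedness of $C_n$ then forces the segment from $a_n$ to $y$ to cross $\partial C_n$, yielding a boundary point of $C_n$ within a controlled distance of $x$, and a second diagonal argument finishes.

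For (e), isc of the convex hull operator is a quick consequence of Carath\'eodory's theorem: write $x\in\mathrm{co}(C)$ as $\sum_{i=1}^{d+1}\lambda_i v_i$ with $v_i\in C$, use isc of $C_n$ to pick $v_{n,i}\to v_i$ with $v_{n,i}\in C_n$, and set $x_n=\sum_i\lambda_i v_{n,i}\in\mathrm{co}(C_n)$. The main obstacle is (f), where osc of the convex hull is also required --- this property fails in general without a boundedness hypothesis. Under $C_n\subseteq C_0\in\mathcal{K}$ a.s., I would argue that any cluster point $x$ of a sequence $x_{n_k}\in\mathrm{co}(C_{n_k})$ admits Carath\'eodory decompositions $x_{n_k}=\sum_{i=1}^{d+1}\lambda_{n_k,i} v_{n_k,i}$ with $v_{n_k,i}\in C_{n_k}\subseteq C_0$ and $(\lambda_{n_k,i})$ in the unit simplex; compactness of $C_0$ and of the simplex then yields a further subsequence along which $\lambda_{n_k,i}\to\lambda_i$ and $v_{n_k,i}\to v_i\in\limsup_n C_n=C$, giving $x=\sum_i\lambda_i v_i\in\mathrm{co}(C)$. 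Together with (e), this yields continuity of $\mathrm{co}$ on the compact region containing all the $C_n$, and Theorem~\ref{thm:smt}(c) closes the argument.
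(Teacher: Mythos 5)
Your proof is correct and follows the same route as the paper's: interpret each operation as a set-valued set function (on the product space $\mathcal{F}\times\mathcal{F}$ for the binary ones), establish the appropriate semicontinuity, and invoke Theorem~\ref{thm:smt}. The only difference is that the paper outsources all six semicontinuity facts to Matheron (1975), whereas you verify them directly from the sequence definitions; your verifications --- including the Carath\'eodory-plus-compactness argument for outer semicontinuity of the convex hull under the $C_n\subseteq C_0\in\mathcal{K}$ hypothesis, which is precisely where osc fails without boundedness --- are all sound.
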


\begin{proof}
We interpret $\cup,\cap$ as set-valued set functions with a domain over the product space $\mathcal{F}\times\mathcal{F}$: The function $G_1(S,T) = S\cup T$ is continuous \cite{matheron1975}, and the function $G_2(S,T) = S\cap T$ is osc \cite{matheron1975}.  The set complement and boundary operators can be interpreted as set-valued set functions with domain $\mathcal{F}$: The function $G_3(S) = \mathrm{cl}(S^\mathsf{c})$ is isc \cite{matheron1975}, and the function $G_4(S) = \partial S$ is isc \cite{matheron1975}.  The convex hull operation can be cast as set-valued set functions: $G_5(S) = \mathrm{co}(S)$ is isc when the domain is $\mathcal{F}$, and $G_6(S) = \mathrm{co}(S)$ is continuous when the domain is $C_0$ \cite{matheron1975}. The results now follow from the corresponding parts of the semicontinuous mapping theorem. \qed
\end{proof}

\begin{remark}
Note the above result states that the stochastic limit of the convex hull operator is sensitive to the domain of the sequence of sets.
\end{remark}

We can also apply the semicontinuous mapping theorem to the Minkowski set operations. These results are useful for proving convergence of statistical estimators.

\begin{corollary}
 \label{cor:minklim}
Let $C_n,D_n\in\mathcal{F}$ be almost surely convergent sequences of sets (i.e., $\aslim_n C_n = C$ and $\aslim_n D_n = D$), and let $\Psi_n$ be an almost surely convergent (in the Frobenius norm) sequence of matrices or scalars (i.e., $\aslim_n \Psi_n = \Psi$).  If there exists a deterministic $D_0\in\mathcal{K}$ so $D_n \subseteq D_0$ a.s., then
\begin{enumerate}[$\mathrm{(\mathtt{\alph*})}$, leftmargin=2.5em]
\item $\aslim_n (C_n\oplus D_n) = C\oplus D$ \label{cor:minklim_osum}
\item $\aslimsup_n (C_n\ominus D_n) \subseteq C\ominus D$, when $D \neq \emptyset$
\item $\aslim_n \Psi_n\cdot D_n = \Psi\cdot D$  \label{cor:minklim_prod}
\item $\aslim_n \Psi^{-1}_nD_n^{\vphantom{-1}} = \Psi^{-1}_{\vphantom{n}}D$, when $\Psi$ is invertible \label{cor:minklim_inv}
\end{enumerate}
\end{corollary}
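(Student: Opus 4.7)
The plan is to deduce each claim from the Semicontinuous Mapping Theorem (Theorem~\ref{thm:smt}) by interpreting each Minkowski operation as a set-valued set function on a suitable product domain. Joint convergence $\aslim_n (C_n, D_n) = (C, D)$ (and likewise $\aslim_n (\Psi_n, D_n) = (\Psi, D)$) follows from the marginal a.s.\ convergences, since the intersection of two a.s.\ events is a.s. The compactness hypothesis $D_n \subseteq D_0 \in \mathcal{K}$ a.s.\ plays the critical role of restricting the second argument to $\mathcal{K}$, where several operations that are only osc on $\mathcal{F} \times \mathcal{F}$ become fully continuous.

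For part~(\ref{cor:minklim_osum}), I would recognize $G_\oplus(S,T) = S \oplus T$ as continuous on $\mathcal{F} \times \mathcal{K}$ (a standard result in set-valued analysis), so the continuous case of Theorem~\ref{thm:smt} gives equality of a.s.\ limits. For part~(b), $G_\ominus(S,T) = S \ominus T$ is osc on $\mathcal{F} \times \mathcal{K}$ restricted to nonempty second argument, but not in general isc, and the osc case of Theorem~\ref{thm:smt} then yields the stated inclusion. For part~(\ref{cor:minklim_prod}), I would verify directly that $G_\cdot(\Psi, T) = \Psi \cdot T$ is continuous from $\mathbb{R}^{m \times n} \times \mathcal{K}$ into $\mathcal{F}$ via the Pompeiu--Hausdorff estimate
\begin{equation}
\bbd_\infty(\Psi_n T_n, \Psi T) \leq \|\Psi_n - \Psi\|_F \sup_{t \in D_0}\|t\| + \|\Psi\|_F\, \bbd_\infty(T_n, T),
\end{equation}
valid whenever $T_n, T \subseteq D_0$, and then appeal to the continuous case of Theorem~\ref{thm:smt}. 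For part~(\ref{cor:minklim_inv}), matrix inversion is continuous on the open set of invertible matrices, so since $\Psi$ is invertible, $\Psi_n$ is a.s.\ invertible for all sufficiently large $n$ (by continuity of the determinant) and $\aslim_n \Psi_n^{-1} = \Psi^{-1}$ follows from the classical continuous mapping theorem; part~(\ref{cor:minklim_prod}) applied with $\Psi_n^{-1}$ in place of $\Psi_n$ then concludes the proof.

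The main obstacle is carefully establishing the semicontinuity properties of the Minkowski set operations on the appropriate domains --- particularly continuity of $\oplus$ on $\mathcal{F} \times \mathcal{K}$ and the osc property of $\ominus$ --- and pinpointing exactly where the compactness of $D_0$ is used. These properties are standard in variational analysis \cite{rockafellar2009}, but they must be invoked with care since the corresponding statements on $\mathcal{F} \times \mathcal{F}$ are weaker; for part~(\ref{cor:minklim_inv}) one additionally has to handle the a.s.\ invertibility of $\Psi_n$ for large $n$, though this is essentially automatic from continuity of the determinant.
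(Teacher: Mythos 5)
Your proposal is correct and follows the same overall architecture as the paper: cast each Minkowski operation as a set-valued set function on $\mathcal{F}\times\mathcal{K}$, note that $\oplus$ is continuous there and $\ominus$ is osc (for nonempty second argument), and feed everything into Theorem~\ref{thm:smt}; part~(\texttt{d}) is likewise handled in both arguments by continuity of matrix inversion at nonsingular points followed by part~(\texttt{c}). The one genuine divergence is in how continuity of the multiplication map $(\Psi,T)\mapsto\Psi\cdot T$ is verified for part~(\texttt{c}). The paper works directly with Painlev\'e--Kuratowski limits: outer semicontinuity is proved by contradiction, using the compactness of $D_0$ to extract a convergent subsequence $v_{n_k}\to\overline{v}\in\overline{S}$ from $u_n=T_n\cdot v_n$, and inner semicontinuity is read off from the inner-limit definition. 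You instead prove the quantitative Pompeiu--Hausdorff estimate $\bbd_\infty(\Psi_n T_n,\Psi T)\leq\|\Psi_n-\Psi\|_F\sup_{t\in D_0}\|t\|+\|\Psi\|_F\,\bbd_\infty(T_n,T)$, which is legitimate here because all the sets involved live in the fixed compact $D_0$, where $\bbd_\infty$-convergence and set convergence coincide (as noted in Section~\ref{sec:dlc}). Your metric route is shorter and yields an explicit modulus of continuity (which would also feed directly into the Approximate Delta Method), at the cost of leaning on the metrization of set convergence for uniformly bounded compact sets; the paper's sequential route stays entirely within the variational-analysis limit calculus and makes visible exactly where compactness of $D_0$ enters (in the subsequence extraction). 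Both are complete proofs of the same continuity fact, so the corollary follows either way.
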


\begin{proof}
We interpret $\oplus,\ominus$ as set-valued set functions with a domain over the product space $\mathcal{F}\times\mathcal{K}$: The function $G_1(S,T) = S\oplus T$ is continuous \cite{matheron1975}, and the function $G_2(S,T) = S\ominus T$ is osc if $T\neq\emptyset$ \cite{matheron1975}.  So the first two results follow from Theorem \ref{thm:smt}.  The multiplication operation can be interpreted as a set-valued set function $G_3(S,T) = T\cdot S$ with domain over the product space $\mathbb{M}\times D_0$, where $\mathbb{M}$ is the space of matrices of appropriate dimension or the space of scalars.  We show it is continuous.  Suppose $G_3$ is not osc at $\overline{S}\times\overline{T}$; then there exist $T_n\rightarrow\overline{T}$, $S_n\rightarrow\overline{S}$, and $u_n\rightarrow\overline{u}$ with $T_n\in\mathbb{M}$, $S_n\in \mathcal{F}$, $u_n\in T_n\cdot S_n$, and $\overline{u}\notin \overline{T}\cdot\overline{S}$.  But by the definition of matrix-set (or scalar-set) multiplication there exists $v_n \in S_n$ with $u_n = T_n\cdot v_n$, and by the boundedness by assumption of $D_0$ there exist $n_k$ and $\overline{v}$ such that $v_{n_k} \rightarrow\overline{v}$ with $\overline{v}\in\overline{S}$, which is a contradiction since matrix-vector (or scalar-vector) multiplication is osc.  Thus $G_3$ is osc.  Next, we show $G_3$ is isc at $\overline{T}\cdot\overline{S}$: Consider any $\overline{x}\in\overline{S}$ and $u = \overline{T}\cdot\overline{x}$, and let $T_n,S_n$ be any sequences satisfying $T_n\rightarrow\overline{T}$ and $S_n\rightarrow\overline{S}$.  By the inner limit definition there exists $x_n\rightarrow\overline{x}$ with $x_n\in S_n$, and so $T_n\cdot x_n\rightarrow\overline{T}\cdot\overline{x}$ with $T_n\cdot x_n\in T_n\cdot S_n$.  So $G_3$ satisfies the definition of being isc at $\overline{T}\cdot\overline{S}$, and is continuous since it is also osc.  The third result follows from Theorem \ref{thm:smt}.  The fourth result is proved by noting Theorem \ref{thm:smt} implies $\aslim_n \Psi_n^{-1} = \Psi_{\vphantom{n}}^{-1}$ since the matrix inverse operation is continuous except at points of singularity, and so $\aslim_n \Psi^{-1}_nC_n^{\vphantom{-1}} = \Psi^{-1}_{\vphantom{n}}C$ by the third result. \qed
\end{proof}

Our final results on stochastic limits are not based on the semicontinuous mapping theorem, but are nevertheless useful for writing stochastic convergence proofs.

\begin{lemma}[Sandwich Lemma]
Let $L_n\in\mathcal{F}$ and $U_n\in\mathcal{F}$ be almost surely convergent sequences of sets (i.e., $\aslim_n L_n = L$ and $\aslim_n U_n = U$), and let $C_n\in\mathcal{F}$ be a sequence of sets.  Then we have
\begin{enumerate}[$\mathrm{(\mathtt{\alph*})}$, leftmargin=2.5em]
\item $\aslimsup_n C_n\subseteq U$, when $C_n \subseteq U_n$ a.s.
\item $\asliminf_n C_n\supseteq L$, when $C_n \supseteq L_n$ a.s.
\item $\aslim_n C_n = L = U$, when $L_n\subseteq C_n \subseteq U_n$ a.s. and $L = U$
\end{enumerate}
\end{lemma}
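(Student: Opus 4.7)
The plan is to handle parts (a) and (b) directly from the sequence definitions of $\limsup_n$ and $\liminf_n$ given in the preliminaries, and then to derive (c) by combining them with the general inclusion $\liminf_n C_n \subseteq \limsup_n C_n$.

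For part (a), I would pick an arbitrary $\omega$ in the almost-sure event on which both $C_n(\omega) \subseteq U_n(\omega)$ for all $n$ and $\lim_n U_n(\omega) = U$ hold; a countable intersection of a.s.~events is a.s., so this event has probability one. On this $\omega$, any $x \in \limsup_n C_n$ comes with a subsequence $x_{n_k} \to x$ and $x_{n_k} \in C_{n_k} \subseteq U_{n_k}$, which immediately exhibits $x$ as a cluster point of the $U_n$, placing $x$ in $\limsup_n U_n = \lim_n U_n = U$.

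For part (b), I would argue symmetrically on the a.s.~event where $C_n \supseteq L_n$ and $\lim_n L_n = L$. For any $x \in L = \liminf_n L_n$, there is a sequence $x_n \to x$ with $x_n \in L_n \subseteq C_n$, so $x$ belongs to $\liminf_n C_n$ by definition.

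For part (c), I would intersect the two a.s.~events used in (a) and (b) with the event $L_n \subseteq C_n \subseteq U_n$ and with $L = U$; on this a.s.~event the inclusions $L = L \subseteq \liminf_n C_n \subseteq \limsup_n C_n \subseteq U = L$ collapse everything, so $\lim_n C_n$ exists and equals $L = U$.

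There is essentially no hard step here; the only thing to be careful about is bookkeeping of the almost-sure events, since each of the hypotheses ``$C_n \subseteq U_n$ a.s.'', ``$C_n \supseteq L_n$ a.s.'', ``$\aslim_n L_n = L$'' and ``$\aslim_n U_n = U$'' determines its own probability-one event, and the deterministic set-theoretic argument only applies on the (still probability-one) intersection of the relevant ones. Nothing about measurability of $\limsup_n C_n$ or $\liminf_n C_n$ needs to be reproved here since these were already built into the random-set framework in the preliminaries.
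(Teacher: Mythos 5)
Your proposal is correct and follows essentially the same route as the paper: the paper's proof simply invokes the monotonicity $\aslimsup_n C_n\subseteq\aslimsup_n U_n$ and $\asliminf_n C_n\supseteq\asliminf_n L_n$ and then combines the two parts with the general inclusion $\liminf_n C_n\subseteq\limsup_n C_n$, which is exactly what you do, only with the monotonicity step unpacked from the sequence definitions and the almost-sure bookkeeping made explicit. No issues.
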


\begin{proof}
For the first two results, note $\aslimsup_n C_n\subseteq \aslimsup_nU_n=\aslim_nU_n=U$ and $\asliminf_n C_n\supseteq \asliminf_nL_n=\aslim_nL_n=L$.  The third result follows from the first two results and the definition of limit.  \qed
\end{proof}

This sandwich lemma is valuable for statistical analysis, and we next present a convergence result that is helpful in proving statistical consistency.

\begin{corollary}
\label{cor:invoplus}
Let $C_n, D_n\in\mathcal{F}$ be sequences of sets, with $D_n \subseteq r_n\mathbb{B}$ for a sequence $r_n\in\mathbb{R}_+$.  If $\aslim_n r_n = 0$ and $\aslim_n C_n\oplus D_n$ exists, then $\aslim_n C_n = \aslim_n C_n\oplus D_n$.
\end{corollary}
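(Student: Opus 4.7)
The plan is to prove $\aslim_n C_n = E$, where $E := \aslim_n (C_n \oplus D_n)$ denotes the hypothesized a.s.\ limit, by establishing the two inclusions $\aslimsup_n C_n \subseteq E$ and $\asliminf_n C_n \supseteq E$ directly from the sequential characterizations of outer and inner limits. The key observation I would exploit is that $D_n \subseteq r_n\mathbb{B}$ together with $\aslim_n r_n = 0$ forces every selection $d_n \in D_n$ to satisfy $\|d_n\| \leq r_n \to 0$ a.s., so elements of $D_n$ can be freely added to or subtracted from convergent sequences without disturbing the limit. Since all the subsequent arguments are pointwise on the probability-one event where $r_n \to 0$ and $\lim_n (C_n \oplus D_n) = E$ both hold, no measurable-selection issues arise.

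For the outer inclusion, I would fix $x \in \limsup_n C_n$ and extract $n_k$ with $x_{n_k} \in C_{n_k}$ and $x_{n_k} \to x$. Then I would pick any $d_{n_k} \in D_{n_k}$ and form $y_{n_k} = x_{n_k} + d_{n_k} \in C_{n_k} \oplus D_{n_k}$. The estimate $\|y_{n_k} - x\| \leq \|x_{n_k} - x\| + r_{n_k} \to 0$ places $x$ in $\limsup_n (C_n \oplus D_n)$, and combining with the hypothesis yields $\aslimsup_n C_n \subseteq E$.

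For the inner inclusion, I would fix $x \in E = \liminf_n (C_n \oplus D_n)$ and take $y_n \to x$ with $y_n \in C_n \oplus D_n$. Decomposing $y_n = c_n + d_n$ with $c_n \in C_n$ and $d_n \in D_n$, the bound $\|c_n - x\| \leq \|y_n - x\| + r_n \to 0$ would give $c_n \to x$ with $c_n \in C_n$, hence $x \in \liminf_n C_n$. The two inclusions together deliver $\aslim_n C_n = E$.

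The main obstacle I foresee is an edge case in the outer inclusion: forming $y_{n_k} = x_{n_k} + d_{n_k}$ requires $D_{n_k}$ to be nonempty. If $D_n = \emptyset$ along a subsequence then $C_n \oplus D_n = \emptyset$ there, and a trivial example (e.g., $C_n = \{0\}$, $D_n = \emptyset$, so $C_n \oplus D_n = \emptyset$ while $\lim_n C_n = \{0\}$) shows the claim can fail without nonemptiness. So the argument implicitly requires $D_n$ nonempty a.s., consistent with the variational-analysis convention used throughout the paper; finitely many exceptional indices are harmless since they do not affect limits.
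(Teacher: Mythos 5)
Your proof is correct and follows essentially the same route as the paper's: the same two sequential-characterization inclusions ($\limsup_n C_n \subseteq \lim_n(C_n\oplus D_n)$ via adding a vanishing $d_{n_k}$, and $\liminf_n C_n \supseteq \lim_n(C_n\oplus D_n)$ via the decomposition $c_n = u_n - d_n$), combined with $\liminf_n C_n \subseteq \limsup_n C_n$. Your observation that nonemptiness of $D_n$ is implicitly required for the outer inclusion is a valid refinement that the paper's proof glosses over.
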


\begin{proof}
Consider any $\overline{c}\in\aslimsup_n C_n$, and note that by the outer limit definition there exist $n_k$ and $c_{n_k}\in C_{n_k}$ such that $c_{n_k}\rightarrow \overline{c}$.  Thus $c_{n_k} + d_{n_k}\rightarrow \overline{c}$ for any $d_n\in D_n$ since by assumption $d_n \rightarrow 0$.  This means $\aslimsup_n C_n \subseteq \aslimsup_n C_n\oplus D_n = \aslim_n C_n\oplus D_n$, where the equality holds since $\aslim_n C_n\oplus D_n$ exists.  Next, choose any $\overline{u}\in\aslim_n C_n\oplus D_n$.  By the inner limit definition there exists $u_n\in C_n\oplus D_n$ such that $u_n\rightarrow \overline{u}$, and so by the Minkowski sum definition there exist $c_n\in C_n$ and $d_n\in D_n$ such that $u_n = c_n + d_n$ or equivalently that $c_n = u_n - d_n$.  Since by assumption $d_n \rightarrow 0$, this means $c_n \rightarrow \overline{u}$. Thus $\asliminf_n C_n\supseteq \aslim_n C_n\oplus D_n$. The result follows by noting $\asliminf_n C_n \subseteq \aslimsup_n C_n$ always holds, and combining with the above.\qed
\end{proof}


%

\subsection{Expectation}

Because sets do not form a vector space, defining expectations for random sets is not straightforward. In fact, a number of different definitions have been proposed \cite{molchanov2006} that capture different features that might be desired for an expectation operation.  One particularly useful definition is the \emph{selection expectation} \cite{kudo1953,aumann1965}.   This definition for the expectation of random sets is the most well studied because it leads to a corresponding law of large numbers and central limit theorem \cite{molchanov2006}.

For a random set $X$, a selection $\xi$ is a (single-valued) random vector that almost surely belongs to $X$.  We say the selection $\xi$ is integrable if $\mathbb{E}\|\xi\|_1$ is finite, where $\|\cdot\|_1$ is the usual $\ell_1$-norm.  The selection expectation of a random set $X$ is defined as
\begin{equation}
\mathbb{E}(X) = \mathrm{cl}\{\mathbb{E}\xi : \xi \in \mathcal{S}^1(X)\},
\end{equation}
where $\mathcal{S}^1(X)$ is the set of all integrable selections of $X$.  The random set $X$ is called integrable if $\mathcal{S}^1(X) \neq \emptyset$, and note this property implies $X$ is almost surely non-empty.

The selection expectation is difficult to use because it cannot be computed by taking an integral, as is the case for expectations for objects in a vector space.  But since we assume $E$ is Euclidean space, the definition of the selection expectation simplifies and has a sharp characterization \cite{molchanov2006}: If the probability space is nonatomic and $X$ is a bounded and closed integrable random set, then $\mathbb{E}(X) = \{\mathbb{E}\xi : \xi \in \mathcal{S}^1(X)\}$ is a compact set, $\mathbb{E}(X)$ is convex, $\mathbb{E}(X) = \mathbb{E}(\mathrm{co}(X))$, and $h(u, \mathbb{E}(X)) = \mathbb{E}(h(u,X))$ for all $u\in E$, where $h$ is the support function.  This support function characterization is powerful, and allows us to prove several properties about the selection expectation.  More importantly, the following results allow us to operationalize the selection expectation, which is useful from a practical standpoint for performing statistical analysis.

\begin{proposition}
Suppose $C,D$ are bounded and closed integrable random sets, and let $\Psi$ be a random matrix or a random scalar.  If the probability space is nonatomic, then
\begin{enumerate}[$\mathrm{(\mathtt{\alph*})}$, leftmargin=2.5em]
\item $\mathbb{E}(C) = \mathrm{co}(C)$, when $C$ is deterministic
\item $\mathbb{E}(C \oplus D) = \mathbb{E}(C)\oplus\mathbb{E}(D)$
\item $\mathbb{E}(\Psi C) = \mathbb{E}(\Psi)\cdot\mathbb{E}(C)$, when $\Psi$ is independent of $C$
\item $\mathbb{E}(C) \subseteq\mathbb{E}(D)$, when $C \subseteq D$ a.s.
\item $\mathbb{E}(C)\cup\mathbb{E}(D)\subseteq\mathbb{E}(C\cup D)$
\item $\mathbb{E}(C\cap D)\subseteq\mathbb{E}(C)\cap\mathbb{E}(D)$
\item $\mathbb{E}(C\ominus D) \subseteq \mathbb{E}(C)\ominus\mathbb{E}(D)$, when $C\ominus D$ is a.s. non-empty.
\end{enumerate}
\end{proposition}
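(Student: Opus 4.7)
The unifying tool throughout is the support function characterization $h(u,\mathbb{E}(X))=\mathbb{E}(h(u,X))$ combined with the fact that a closed convex set in Euclidean space is uniquely determined by its support function. My plan is to handle (a)--(c) by this characterization and handle (d)--(g) by directly manipulating integrable selections.

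For (a), since $C$ is deterministic $\mathbb{E}(h(u,C))=h(u,C)=h(u,\mathrm{co}(C))$ because support functions do not distinguish a set from its convex hull, and $\mathbb{E}(C)$ and $\mathrm{co}(C)$ are both closed convex (the latter because $C$ is compact), so equality of support functions gives $\mathbb{E}(C)=\mathrm{co}(C)$. For (b), use the elementary identity $h(u,A\oplus B)=h(u,A)+h(u,B)$, take expectations, use linearity, and then invoke the support function characterization on each side to conclude $h(u,\mathbb{E}(C\oplus D))=h(u,\mathbb{E}(C)\oplus \mathbb{E}(D))$; since Minkowski sums of compact convex sets are compact convex, uniqueness finishes the argument. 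For (c), apply $h(u,\Psi C)=h(\Psi^{\mathsf T}u,C)$, take expectations, and use independence of $\Psi$ and $C$ to factor as $\mathbb{E}_{\Psi}[\mathbb{E}_{C}h(\Psi^{\mathsf T}u,C)]=\mathbb{E}_{\Psi}h(\Psi^{\mathsf T}u,\mathbb{E}(C))$, then push the outer expectation through the (now deterministic) set $\mathbb{E}(C)$ to recognize the right-hand side as $h(u,\mathbb{E}(\Psi)\cdot\mathbb{E}(C))$.

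For (d), any $\xi\in\mathcal{S}^{1}(C)$ satisfies $\xi\in C\subseteq D$ a.s., hence $\xi\in\mathcal{S}^{1}(D)$, so $\{\mathbb{E}\xi:\xi\in\mathcal{S}^{1}(C)\}\subseteq\{\mathbb{E}\xi:\xi\in\mathcal{S}^{1}(D)\}$, and taking closures gives $\mathbb{E}(C)\subseteq\mathbb{E}(D)$. For (e), note $C\subseteq C\cup D$ and $D\subseteq C\cup D$ a.s., and apply (d) twice; the union of the two inclusions yields the desired one. For (f), any selection of $C\cap D$ is simultaneously a selection of $C$ and of $D$, so the expectations of selections of $C\cap D$ lie in both $\mathbb{E}(C)$ and $\mathbb{E}(D)$, hence in their intersection, and taking closure preserves this. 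For (g), start from the elementary inclusion $(C\ominus D)\oplus D\subseteq C$, which follows directly from the definition of Minkowski difference. Applying (d) yields $\mathbb{E}((C\ominus D)\oplus D)\subseteq\mathbb{E}(C)$, then (b) yields $\mathbb{E}(C\ominus D)\oplus\mathbb{E}(D)\subseteq\mathbb{E}(C)$, which by the definition of $\ominus$ is equivalent to $\mathbb{E}(C\ominus D)\subseteq\mathbb{E}(C)\ominus\mathbb{E}(D)$.

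The main obstacle will be (c), because in the step that factors the expectation by independence one must be careful: not every selection of $\Psi C$ has the form $\Psi\xi$ with $\xi$ independent of $\Psi$, so a direct selection argument does not work, and the support function route is the cleanest. A secondary technical burden, appearing throughout, is checking that each derived random set ($C\oplus D$, $\Psi C$, $C\cap D$, $C\cup D$, $C\ominus D$) remains bounded, closed, and integrable so that the support function characterization applies; for (g) this in particular requires the hypothesis that $C\ominus D$ is almost surely nonempty to guarantee integrability.
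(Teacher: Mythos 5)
Parts (a), (b), and (d)--(g) of your proposal are correct and essentially reproduce the paper's argument: (a), (b), (d) rest on the support-function characterization $h(u,\mathbb{E}(X))=\mathbb{E}(h(u,X))$ (you substitute a direct selection argument for (d) and (f), which is an equally valid minor variation), and (e)--(g) are derived from (d) and (b) exactly as in the paper, including the inclusion $(C\ominus D)\oplus D\subseteq C$ for (g).

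The genuine gap is in (c), the part you flagged as the main obstacle and for which you chose the support-function route that the paper explicitly avoids. Your identity $h(u,\Psi C)=h(\Psi^{\mathsf T}u,C)$ and the conditioning step $\mathbb{E}\big(h(\Psi^{\mathsf T}u,C)\big)=\mathbb{E}_{\Psi}\big(h(\Psi^{\mathsf T}u,\mathbb{E}(C))\big)$ are fine, but the last step --- pulling $\mathbb{E}_{\Psi}$ inside to obtain $h(\mathbb{E}(\Psi)^{\mathsf T}u,\mathbb{E}(C))$ --- requires $\psi\mapsto h(\psi^{\mathsf T}u,K)$ to be \emph{linear} in $\psi$, whereas it is only convex (a supremum of functions linear in $\psi$). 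Jensen therefore gives $\mathbb{E}_{\Psi}\,h(\Psi^{\mathsf T}u,K)\ \ge\ h(\mathbb{E}(\Psi)^{\mathsf T}u,K)$, and the inequality is strict in general: take $K=\mathbb{E}(C)=[-1,1]$ with $C$ deterministic and $\Psi=\pm1$ each with probability $1/2$, so that $\mathbb{E}_{\Psi}\,h(\Psi u,K)=|u|$ while $h(\mathbb{E}(\Psi)u,K)=0$. In that example one even has $\mathbb{E}(\Psi C)=[-1,1]$ but $\mathbb{E}(\Psi)\cdot\mathbb{E}(C)=\{0\}$, so the equality you are trying to prove fails outright there; your step cannot be patched without restricting $\Psi$ (e.g., deterministic or almost surely of one sign, where positive homogeneity of $h(\cdot,K)$ restores linearity), which is consistent with the paper's own remark listing such special cases. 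The paper instead proves (c) by a selection argument: it establishes $\mathcal{S}^1(\Psi C)=\Psi\,\mathcal{S}^1(C)$ via the Castaing representation and Lemma 1.3 of Molchanov, and then factors $\mathbb{E}(\Psi\xi)=\mathbb{E}(\Psi)\,\mathbb{E}(\xi)$ --- precisely the independence-of-selections issue you correctly identified as delicate, and which the support-function detour does not actually circumvent.
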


\begin{proof}
The first result holds since $\mathbb{E}(X) = \mathbb{E}(\mathrm{co}(X))$ and $h(u, \mathbb{E}(C)) = \mathbb{E}(h(u,C)) = h(u,C)$.  The next result follows from $h(u, C\oplus D) = h(u,C) + h(u,D)$ \cite{schneider1993}, since $h(u, \mathbb{E}(C\oplus D)) = \mathbb{E}(h(u,C\oplus D)) = \mathbb{E}(h(u,C) + h(u,D)) = \mathbb{E}(h(u,C)) + \mathbb{E}(h(u,D)) = h(u,\mathbb{E}(C)) + h(u,\mathbb{E}(D)) = h(u,\mathbb{E}(C)\oplus\mathbb{E}(D))$.  The fourth result holds since $h(u, C) \leq h(u,D)$ when $C \subseteq D$ \cite{schneider1993}, which implies $h(u, \mathbb{E}(C)) = \mathbb{E}(h(u,C)) \leq \mathbb{E}(h(u,D)) = h(u, \mathbb{E}(D))$.  For the fifth result, note $C \subseteq C\cup D$ and $D\subseteq C\cup D$.  The fourth result gives $\mathbb{E}(C)\subseteq\mathbb{E}(C\cup D)$ and $\mathbb{E}(D)\subseteq\mathbb{E}(C\cup D)$, which implies $\mathbb{E}(C)\cup\mathbb{E}(D)\subseteq\mathbb{E}(C\cup D)$.  The sixth result follows since combining $C\cap D \subseteq C$, $C\cap D \subseteq D$, and the fourth result gives: $\mathbb{E}(C\cap D)\subseteq\mathbb{E}(C)$ and $\mathbb{E}(C\cap D)\subseteq\mathbb{E}(D)$, which implies $\mathbb{E}(C\cap D)\subseteq\mathbb{E}(C)\cap\mathbb{E}(D)$.  To prove the seventh result, note $(C\ominus D)\oplus D\subseteq C$ \cite{schneider1993}.  Applying the second and fourth results yields $\mathbb{E}(C\ominus D)\oplus\mathbb{E}(D) \subseteq\mathbb{E}(C)$, and so $\mathbb{E}(C\ominus D) \subseteq \mathbb{E}(C)\ominus\mathbb{E}(D)$ \cite{schneider1993}.



The third result cannot be proved using support functions since $h(x,\Psi C)$ cannot be written in terms of $h(x,C)$.  (If $\Psi = -1$, then $h(x,\Psi C) = \inf_{y\in C} x^\textsf{T}y$ while $h(x,C) = \sup_{y\in C} x^\textsf{T}y$.)  Our approach is to show $\mathcal{S}^1(\Psi C) = \Psi\mathcal{S}^1(C)$, since this implies $\mathbb{E}(\Psi C) = \{\mathbb{E}(\Psi\xi) : \xi\in\mathcal{S}^1(C)\} = \{\mathbb{E}(\Psi)\cdot\mathbb{E}(\xi) : \xi\in\mathcal{S}^1(C)\} = \mathbb{E}(\Psi)\cdot\{\mathbb{E}(\xi) : \xi\in\mathcal{S}^1(C)\} = \mathbb{E}(\Psi)\cdot\mathbb{E}(C)$. The inclusion $\mathcal{S}^1(\Psi C) \supseteq \Psi\mathcal{S}^1(C)$ is obvious by definition.  To prove the reverse inclusion, let $\{\xi_n, n\geq 1\}$ with $\xi_n\in\mathcal{S}^1(C)$ be the Castaing representation \cite{castaing1967multi,molchanov2006,rockafellar2009} of $C$.  Then $\{\Psi\xi_n, n\geq 1\}$ is the Castaing representation of $\Psi C$.  But by Lemma 1.3 of \cite{molchanov2006}, each selection in $\mathcal{S}^1(\Psi C)$ can be approximated arbitrarily well by step functions with arguments from $\{\Psi\xi_n, n\geq 1\}$.  Thus $\mathcal{S}^1(\Psi C)\subseteq \Psi\mathcal{S}^1(C)$, and so $\mathcal{S}^1(\Psi C) = \Psi\mathcal{S}^1(C)$ since both inclusions were shown.
\qed
\end{proof}

\begin{remark}
Note the assumptions for part (c) include the cases where: $\Psi$ is deterministic, $C$ is deterministic, or $\Psi$ has positive or negative entries.
\end{remark}

Another result used in statistics is Jensen's inequality \cite{bickel2006}, which bounds changing the order of applying an expectation and a convex function to a random variable.  Our next result shows we can generalize Jensen's inequality to set-valued functions.

\begin{proposition}[Jensen's Inequality]
Let $S(u)$ be a graph-convex set-valued function (i.e., $S((1-\lambda)u_0 + \lambda u_1) \supseteq (1-\lambda)\cdot S(u_0) + \lambda\cdot S(u_1) \text{ for }\lambda\in(0,1))$, and let $X$ be bounded and closed integrable random set.  If $S(\cdot)$ is locally bounded (i.e., $S(B)$ is bounded for every bounded set $B$) and continuous, then we have $S(\mathbb{E}(X)) \supseteq\mathbb{E}(S(X))$.\end{proposition}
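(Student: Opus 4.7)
The plan is to reduce the set-valued inequality to the classical vector-valued Jensen inequality applied to the graph $\mathrm{gph}(S) = \{(u,v) : v \in S(u)\}$, which is convex by graph-convexity and closed by outer semicontinuity (a consequence of continuity of $S$). The bridge between selections of $S(X)$ and selections of $X$ will be a measurable selection theorem.

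First I would observe that $S(\mathbb{E}(X))$ is compact, hence closed: $\mathbb{E}(X)$ is compact by the support function characterization of the selection expectation, and the image of a compact set under an osc locally bounded set-valued map is compact. Next, given any integrable selection $\eta \in \mathcal{S}^1(S(X))$, I would form the multifunction $\Xi(\omega) = \{u \in X(\omega) : \eta(\omega) \in S(u)\}$. This is a.s.\ nonempty (by definition of $\eta$), closed-valued (as the intersection of $X(\omega)$ with $\{u : (u,\eta(\omega)) \in \mathrm{gph}(S)\}$), and measurable because $\mathrm{gph}(S)$ is closed and $X$ is a random set. A measurable selection $\xi$ of $\Xi$ then lies in $\mathcal{S}^1(X)$ because $\xi(\omega) \in X(\omega)$ inherits boundedness and integrability from $X$.

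At this point $(\xi(\omega), \eta(\omega)) \in \mathrm{gph}(S)$ almost surely, and since $\mathrm{gph}(S)$ is convex and closed and $(\xi,\eta)$ is integrable, the classical vector-valued Jensen inequality delivers $(\mathbb{E}\xi, \mathbb{E}\eta) \in \mathrm{gph}(S)$, i.e., $\mathbb{E}\eta \in S(\mathbb{E}\xi) \subseteq S(\mathbb{E}(X))$ since $\mathbb{E}\xi \in \mathbb{E}(X)$. This shows $\{\mathbb{E}\eta : \eta \in \mathcal{S}^1(S(X))\} \subseteq S(\mathbb{E}(X))$, and taking the closure (which stays inside $S(\mathbb{E}(X))$ by the first step) gives $\mathbb{E}(S(X)) \subseteq S(\mathbb{E}(X))$, the desired inclusion.

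The main obstacle I expect is the measurable selection step: verifying that $\Xi$ is a random set and extracting an integrable selection from it. This is where the hypotheses enter most forcefully. Continuity of $S$ is used to make $\mathrm{gph}(S)$ closed, which in turn makes $\Xi$ closed-valued and ultimately makes the Jensen step legitimate; local boundedness is used to make $S(\mathbb{E}(X))$ compact (hence closed) so that the final closure operation does not escape $S(\mathbb{E}(X))$. The support function characterization of $\mathbb{E}(X)$ as a compact convex set, which implicitly requires the nonatomic assumption from the preceding proposition, is used only once — to know that $\mathbb{E}(X)$ is compact so that $S(\mathbb{E}(X))$ is well-behaved.
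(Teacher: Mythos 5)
Your proof is correct, but it takes a genuinely different route from the paper's. The paper passes through the Debreu expectation: it approximates $X$ by simple random sets $X_n$ taking finitely many values $F_{in}$ with probabilities $p_{in}$, writes $\mathbb{E}(X_n)=\bigoplus_i p_{in}\cdot F_{in}$, applies graph-convexity to the finite Minkowski combination to get $S\bigl(\bigoplus_i p_{in}\cdot F_{in}\bigr)\supseteq\bigoplus_i p_{in}\cdot S(F_{in})$, and then passes to the limit using the semicontinuous mapping theorem and the continuity of $S$. You instead work selection by selection: for each $\eta\in\mathcal{S}^1(S(X))$ you extract, via a measurable selection from $\Xi(\omega)=X(\omega)\cap\{u:(u,\eta(\omega))\in\mathrm{gph}(S)\}$, a companion $\xi\in\mathcal{S}^1(X)$ with $(\xi,\eta)\in\mathrm{gph}(S)$ a.s., and then invoke the elementary fact that the mean of an integrable random vector supported in a closed convex set lies in that set. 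The trade-off is instructive. Your argument needs the measurable-selection machinery (legitimate here since the probability space is complete, $\mathrm{gph}(S)$ is closed, and $(\omega,u)\mapsto(u,\eta(\omega))$ is Carath\'eodory), but in exchange it uses strictly weaker hypotheses: outer semicontinuity and local boundedness of $S$ suffice, full continuity is never needed, and nonatomicity is not needed either --- compactness of $\mathbb{E}(X)$ already follows from boundedness of $X$ without the support-function characterization you cite, so that citation is heavier than necessary. The paper's route avoids selection theorems entirely and reuses its own semicontinuous mapping theorem, but pays for this with the stronger continuity assumption and the equivalence of selection and Debreu expectations. Both arguments hinge on the same geometric fact, namely that graph-convexity of $S$ is exactly convexity of $\mathrm{gph}(S)$; yours exposes this more directly.
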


\begin{proof}
The selection expectation equals the Debreu expectation under our assumptions \cite{molchanov2006}.  This means there exists a sequence of random sets $X_n$ with the distribution
\begin{equation}
X_n = F_{in} \text{ with probability } p_{in},\  \text{ for } i = 1,\ldots,n, \ \text{ with } \textstyle\sum_{i=1}^np_{in}=1
\end{equation}
such that $\aslim X_n = X$, $\mathbb{E}(X) = \lim_n \mathbb{E}(X_n)$, and $\mathbb{E}(X_n) = \bigoplus_{i=1}^n p_{in}\cdot F_{in}$.  Using the semicontinuous mapping theorem implies $\aslim_n S(X_n) = S(X)$, and so we have equality of the selection expectation and Debreu expectation \cite{molchanov2006}. This means that $\mathbb{E}(S(X)) = \lim_n \mathbb{E}(S(X_n))$ and $\mathbb{E}(S(X_n)) = \bigoplus_{i=1}^n p_{in}\cdot S(F_{in})$.  Next note $S(\bigoplus_{i=1}^n p_{in}\cdot F_{in}) \supseteq \bigoplus_{i=1}^n p_{in}\cdot S(F_{in})$ by the graph-convexity of $S(\cdot)$.  Taking the limit of this set relationship gives $S(\mathbb{E}(X)) = \lim S(\bigoplus_{i=1}^n p_{in}\cdot F_{in}) \supseteq \lim_n \bigoplus_{i=1}^n p_{in}\cdot S(F_{in}) = \mathbb{E}(S(X))$, where we have used the fact that $\lim_n S(\bigoplus_{i=1}^n p_{in}\cdot F_{in})= S(\mathbb{E}(X))$ by definition of the continuity of the set-valued function $S(\cdot)$.
\qed
\end{proof}

\begin{remark}
Jensen's inequality is sometimes stated for concave functions, and such a generalization exists for set-valued mappings.  If $S(u)$ is a graph-concave set-valued function (i.e., $S((1-\lambda)u_0 + \lambda u_1) \subseteq (1-\lambda)\cdot S(u_0) + \lambda\cdot S(u_1) \text{ for }\lambda\in(0,1))$) and the other assumptions of the above theorem hold, then we have $S(\mathbb{E}(X)) \subseteq\mathbb{E}(S(X))$.
\end{remark}


Lastly, we present a strong law of large numbers (\textsf{SLLN}) for the selection expectation.  The key idea is the Minkowski sum takes the role of averaging.

\begin{theorem}[Artstein and Vitale, 1975 \cite{artstein1975}]
\label{thm:slln}
Suppose the probability space is non-atomic.  If $X, X_i$, $i \geq 1$, are i.i.d. bounded and closed integrable random sets, then we have that: $\aslim_n \frac{1}{n}\bigoplus_{i=1}^n X_i = \mathbb{E}(X)$.
\end{theorem}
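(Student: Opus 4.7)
The plan is to reduce set convergence to uniform convergence of scalar support functions, to which the classical Kolmogorov SLLN applies. Using the characterization $h(u,\mathbb{E}(X)) = \mathbb{E}(h(u,X))$ recalled above, the additivity $h(u,C\oplus D) = h(u,C)+h(u,D)$ of \cite{schneider1993}, and positive homogeneity of $h$, we have
\begin{equation*}
h\bigl(u,\tfrac{1}{n}\textstyle\bigoplus_{i=1}^n X_i\bigr) \;=\; \tfrac{1}{n}\textstyle\sum_{i=1}^n h(u,X_i).
\end{equation*}
For each fixed $u$ the scalars $h(u,X_i)$ are i.i.d.\ with finite mean (from integrability and boundedness of $X$), so the scalar SLLN yields $\tfrac{1}{n}\sum_{i=1}^n h(u,X_i) \to h(u,\mathbb{E}(X))$ almost surely.

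To pass from pointwise-in-$u$ convergence to convergence of the sets themselves, I would upgrade to convergence uniform on the unit sphere. Since $h(\cdot,C)$ is Lipschitz in $u$ with constant $\sup_{x\in C}\|x\|$, the boundedness hypothesis supplies a deterministic bound under which the family $\{h(\cdot,\tfrac{1}{n}\bigoplus_{i=1}^n X_i)\}_{n\geq 1}$ is equi-Lipschitz almost surely. Applying the pointwise SLLN on a countable dense subset of the compact unit sphere and intersecting the resulting probability-one events (a countable intersection of such events has probability one) gives almost-sure convergence on a dense set; equi-Lipschitz continuity then upgrades this to uniform convergence in $u$, which is equivalent to convergence of the sets in the Pompeiu-Hausdorff distance $\bbd_\infty$ and hence to $\aslim$ convergence in $\mathcal{K}$ (Section~\ref{sec:dlc}).

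The argument so far handles the case where the $X_i$ are almost surely convex. For general $X_i$, note first that $\mathbb{E}(X) = \mathbb{E}(\mathrm{co}(X))$ leaves the right-hand side of the theorem unchanged under convexification, while on the sample side the Shapley-Folkman-Starr inequality bounds $\bbd_\infty\bigl(\tfrac{1}{n}\bigoplus_{i=1}^n X_i,\ \tfrac{1}{n}\bigoplus_{i=1}^n \mathrm{co}(X_i)\bigr)$ by a quantity of order $\sqrt{d}\,\sup_i\mathrm{diam}(X_i)/n$, which vanishes under the boundedness hypothesis; the Sandwich Lemma (or Corollary~\ref{cor:invoplus} applied with a shrinking-ball offset) then transports the convex-case conclusion to the non-convex setting.

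The main obstacle is the equi-Lipschitz step: without a deterministic uniform bound on $\sup_i\sup_{x\in X_i}\|x\|$, the random Lipschitz constants of the support functions must be controlled via a truncation argument (replace each $X_i$ by $X_i\cap R\mathbb{B}$, establish the SLLN, and let $R\to\infty$), and extracting the correct moment condition from the ``bounded integrable'' hypothesis to justify this truncation is the delicate technical core of the Artstein-Vitale argument.
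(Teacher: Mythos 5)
The paper does not prove this theorem at all: it is imported verbatim from Artstein and Vitale (1975) with a citation, so there is no in-paper proof to compare against. Your proposal is, in outline, a faithful reconstruction of the original Artstein--Vitale argument, and its main steps are sound: the identity $h(u,\tfrac{1}{n}\bigoplus_{i=1}^n X_i)=\tfrac{1}{n}\sum_{i=1}^n h(u,X_i)$, the pointwise scalar SLLN combined with $h(u,\mathbb{E}(X))=\mathbb{E}(h(u,X))$, the upgrade to uniform convergence on the sphere via equi-Lipschitz support functions, Hörmander's identification of $\bbd_\infty$ on compact convex sets with the sup-norm distance of support functions, and the Shapley--Folkman--Starr reduction of the non-convex case to the convex one. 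Two small refinements: (i) the ``delicate'' truncation you worry about at the end is not actually needed --- the Lipschitz constant of $h(\cdot,\tfrac{1}{n}\bigoplus_{i=1}^n X_i)$ is at most $\tfrac{1}{n}\sum_{i=1}^n\sup_{x\in X_i}\|x\|$, which converges a.s.\ to $\mathbb{E}(\sup_{x\in X}\|x\|)$ by one more application of the scalar SLLN, so the family is a.s.\ eventually equi-Lipschitz with a deterministic constant (equivalently, one can invoke Mourier's SLLN in the separable Banach space of continuous functions on the unit sphere and skip the dense-subset argument entirely); (ii) in the Shapley--Folkman step the correct vanishing quantity is $\sqrt{d}\,\max_{i\leq n}\sup_{x\in X_i}\|x\|/n$, and without a uniform deterministic bound on the $X_i$ you need the standard Borel--Cantelli observation that integrability of $\sup_{x\in X}\|x\|$ forces $\max_{i\leq n}\sup_{x\in X_i}\|x\|/n\rightarrow 0$ a.s.; also note that $\bbd_\infty$-convergence implies the Painlev\'e--Kuratowski convergence used for $\aslim$ here because the averages are a.s.\ eventually contained in a fixed ball. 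With those two points patched, the proposal is a complete and correct proof.
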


This particular strong law of large numbers can be generalized in a number of ways, and a survey of the different generalizations possible  can be found in \cite{molchanov2006}.

\subsection{Central Limit Theorems}

Unlike laws of large numbers that relate convergence of Minkowski sums of i.i.d. random sets $\frac{1}{n}\bigoplus_{i=1}^n X_i$ to their selection expectation $\mathbb{E}(X)$, analogs of the central limit theorem (\textsf{CLT}) relating Minkowski sums and selection expectations are less well-understood.  One major impediment is that the $\ominus$ operator does not generally invert the $\oplus$ operator, which means it is generally not possible to normalize (in the sense of having a zero mean) the Minkowski sum $\frac{1}{n}\bigoplus_{i=1}^n X_i$.  As a result, the standard approach to generalizing the central limit theorem is to normalize by instead considering the Hausdorff distance between Minkowski sum and the selection expectation.

\begin{theorem}[Weil, 1982 \cite{weil1982}]
Suppose the probability space is nonatomic.  If $X, X_i$, $i \geq 1$, are i.i.d. bounded and closed integrable random sets, then we have that: $\sqrt{n}\cdot\bbd_\infty(\frac{1}{n}\bigoplus_{i=1}^n X_i,\mathbb{E}(X)) \rightarrow \sup_u\{\|\zeta(u)\|\ |\ \|u\|\leq 1\}$ in distribution, where $\zeta(u)$ for $\|u\|\leq 1$ is a centered Gaussian random field with covariance given by: $\mathbb{E}(\zeta(u)\cdot \zeta(v)) = \mathbb{E}(h(u,X)\cdot h(v,X)) - \mathbb{E}(h(u,X))\cdot \mathbb{E}(h(v,X))$.
\end{theorem}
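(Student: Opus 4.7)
The plan is to reduce Weil's claim to the classical functional central limit theorem for empirical processes by passing to support functions. The groundwork is provided by the characterization $h(u,\mathbb{E}(X))=\mathbb{E}(h(u,X))$ recalled earlier, the identities $h(u,A\oplus B)=h(u,A)+h(u,B)$ and $h(u,\lambda A)=\lambda h(u,A)$ for $\lambda\geq 0$, and the fact that for compact convex $A,B$ one has $\bbd_\infty(A,B)=\sup_{\|u\|\leq 1}|h(u,A)-h(u,B)|$ (which follows from LP duality applied to the dual formula for point-to-convex-set distance).

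The first substantive step removes the potential non-convexity of $\tfrac{1}{n}\bigoplus_{i=1}^n X_i$. Since the $X_i$ are uniformly bounded, say $X\subseteq M\mathbb{B}$ a.s.\ for a deterministic $M<\infty$, the Shapley--Folkman--Starr estimate gives $\bbd_\infty\bigl(\tfrac{1}{n}\bigoplus_{i=1}^n X_i,\mathrm{co}(\tfrac{1}{n}\bigoplus_{i=1}^n X_i)\bigr)\leq 2\sqrt{d}\,M/n=o(1/\sqrt{n})$. By the triangle inequality for $\bbd_\infty$, the replacement of $\tfrac{1}{n}\bigoplus_{i=1}^n X_i$ by its convex hull is therefore negligible after scaling by $\sqrt{n}$, and combining with the support-function identities yields
\begin{equation*}
\sqrt{n}\,\bbd_\infty\Bigl(\tfrac{1}{n}\bigoplus_{i=1}^n X_i,\mathbb{E}(X)\Bigr)=\sup_{\|u\|\leq 1}\bigl|G_n(u)\bigr|+o_p(1),
\end{equation*}
where $G_n(u)=\tfrac{1}{\sqrt{n}}\sum_{i=1}^n\bigl(h(u,X_i)-\mathbb{E}(h(u,X))\bigr)$ is the empirical process attached to the class $\mathcal{H}=\{h(u,\cdot):\|u\|\leq 1\}$.

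The second step is a functional CLT establishing $G_n\rightsquigarrow\zeta$ in $\ell^\infty(\mathbb{B})$. Finite-dimensional convergence to a centered Gaussian vector with covariance $\mathbb{E}(h(u,X)h(v,X))-\mathbb{E}(h(u,X))\mathbb{E}(h(v,X))$ is immediate from the multivariate Lindeberg--L\'evy CLT applied to the bounded random vector $(h(u_j,X))_{j=1}^k$, and this covariance matches the one prescribed for $\zeta$. Tightness amounts to showing $\mathcal{H}$ is a $\mathbb{P}$--Donsker class: because $|h(u,X)|\leq M$ and $|h(u,X)-h(u',X)|\leq M\|u-u'\|$ a.s., covering $\{u:\|u\|\leq 1\}$ by an $(\varepsilon/M)$-net produces $L^2$-brackets of width $\varepsilon$, yielding the bracketing-number bound $N_{[\,]}(\varepsilon,\mathcal{H},L^2(\mathbb{P}))\lesssim(M/\varepsilon)^d$, whose square-root-log integral is finite. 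The continuous mapping theorem applied to the continuous functional $g\mapsto\sup_{\|u\|\leq 1}|g(u)|$ on $\ell^\infty(\mathbb{B})$ then delivers the claimed weak limit.

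The principal technical obstacle is this functional CLT, because the statement is a uniform-in-$u$ limit rather than a pointwise one. Fortunately, boundedness of $X$ furnishes both a uniform envelope and a uniform Lipschitz modulus for $\mathcal{H}$, which reduces the entropy bound to a routine covering estimate over a compact Euclidean index set; weakening the boundedness hypothesis to square-integrability of $\|X\|_*=\sup\{\|x\|:x\in X\}$ is possible but requires a bracketing argument with a non-constant envelope. The Shapley--Folkman reduction, though brief, is essential, since without it the passage from the Hausdorff distance of a possibly non-convex Minkowski average to a supremum of support-function differences would not be legitimate.
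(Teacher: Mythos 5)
The paper does not prove this theorem: it is imported from Weil (1982) and stated without proof, so there is no internal argument to compare against. Your sketch is essentially the standard (and essentially Weil's own) route: embed convex bodies into a function space via support functions, use $\bbd_\infty(A,B)=\sup_{\|u\|\leq 1}|h(u,A)-h(u,B)|$ for nonempty compact convex $A,B$ together with $h(u,\mathbb{E}(X))=\mathbb{E}(h(u,X))$ and additivity of $h$ under $\oplus$, remove the non-convexity of the Minkowski average via Shapley--Folkman--Starr at rate $O(1/n)=o(n^{-1/2})$, and finish with a functional CLT for the class $\{h(u,\cdot):\|u\|\leq 1\}$ followed by the continuous mapping theorem for $g\mapsto\sup_{\|u\|\leq 1}|g(u)|$. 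The decomposition is sound and the bracketing-entropy bound is routine once you have a uniform envelope. The one point to tighten is the hypothesis: ``bounded'' here means each realization is bounded (so that $\mathbb{E}(X)$ is compact), not that $X\subseteq M\mathbb{B}$ a.s.\ for a single deterministic $M$, and Weil's theorem in fact requires $\mathbb{E}(\|X\|^2)<\infty$ with $\|X\|=\sup\{\|x\|:x\in X\}$ --- a condition the paper's statement itself omits. Under that weaker assumption both your reduction (the Shapley--Folkman bound becomes of order $\max_i\|X_i\|/n$, which is $o_p(n^{-1/2})$ by square integrability) and the Donsker property (bracketing with the random envelope and random Lipschitz modulus $\|X\|$) still go through, as your closing paragraph anticipates; making that explicit would complete the argument.
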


The difficulty with this central limit theorem is that it lacks a clear geometrical interpretation (in contrast to the classical central limit theorem for random variables) for the limiting distribution, and the question of whether such a geometrical interpretation exists remains open \cite{molchanov2006}.  However, one advantage of this formulation is that it lends itself to a generalization of the Delta method \cite{bickel2006} from statistics.


\begin{proposition}[Approximate Delta Method]
Suppose $r_n\bbd_\infty(C_n,C) \rightarrow w$ in distribution, where $r_n$ is a strictly increasing sequence, $C_n\in\mathcal{K}$ is a sequence of random sets, $C\in\mathcal{K}$ is a deterministic set, and $w$ is a random variable.  If $S$ is a Lipschitz continuous set-valued set function, then
\begin{equation}
\textstyle\limsup_n \mathbb{P}(r_n\bbd_\infty(S(C_n),S(C)) \geq u) \leq \mathbb{P}(\kappa \cdot w\geq u),
\end{equation}
where $\kappa\in\mathbb{R}_+$ is the Lipschitz constant of $S$.
\end{proposition}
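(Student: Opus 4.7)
The plan is to reduce everything to the scalar random variables $r_n\bbd_\infty(C_n,C)$, for which convergence in distribution is given by hypothesis, via a single Lipschitz inequality. The main ingredients are the Lipschitz property of $S$ (which contracts Hausdorff distances) and a Portmanteau-type one-sided inequality for convergence in distribution.

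First I would extract a deterministic Lipschitz bound at the level of the Hausdorff distance. By the Lipschitz definition stated in Section~\ref{sec:dlc}, the inclusions
\begin{equation*}
S(C_n)\subseteq S(C)+\kappa\,\bbd_\infty(C_n,C)\,\mathbb{B}\quad\text{and}\quad S(C)\subseteq S(C_n)+\kappa\,\bbd_\infty(C_n,C)\,\mathbb{B}
\end{equation*}
hold simultaneously (since $\bbd_\infty$ is symmetric), and both $S(C_n)$ and $S(C)$ lie in $\mathcal{K}$. Since $\bbd_\infty(A,B)$ is the infimum of all $\varepsilon$ for which $A\subseteq B+\varepsilon\mathbb{B}$ and $B\subseteq A+\varepsilon\mathbb{B}$, these two inclusions give the pointwise (sample-path) bound $\bbd_\infty(S(C_n),S(C))\leq \kappa\,\bbd_\infty(C_n,C)$. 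Multiplying by $r_n$ yields $r_n\bbd_\infty(S(C_n),S(C))\leq \kappa\cdot r_n\bbd_\infty(C_n,C)$ almost surely.

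Next I would translate this into an inequality of probabilities. Monotonicity of $\mathbb{P}$ gives
\begin{equation*}
\mathbb{P}\bigl(r_n\bbd_\infty(S(C_n),S(C))\geq u\bigr)\leq \mathbb{P}\bigl(\kappa\cdot r_n\bbd_\infty(C_n,C)\geq u\bigr),
\end{equation*}
so it remains to pass to the limit on the right. Because $r_n\bbd_\infty(C_n,C)\to w$ in distribution, the continuous mapping theorem gives $\kappa\cdot r_n\bbd_\infty(C_n,C)\to \kappa\cdot w$ in distribution. Applying the Portmanteau theorem to the closed half-line $[u,\infty)$ then yields
\begin{equation*}
\textstyle\limsup_n\mathbb{P}\bigl(\kappa\cdot r_n\bbd_\infty(C_n,C)\geq u\bigr)\leq \mathbb{P}(\kappa\cdot w\geq u),
\end{equation*}
which chained with the previous inequality gives the stated bound.

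The proof is essentially routine; the only subtle point is the use of the closed-set version of Portmanteau, which is what forces the ``$\limsup$'' rather than an equality in the conclusion (at a discontinuity point of the distribution of $\kappa w$ one cannot do better in general). I do not anticipate a real obstacle: the Lipschitz inequality is an immediate consequence of the definition of $\bbd_\infty$ applied to the two Lipschitz inclusions, and the distributional step is a standard one-sided inequality.
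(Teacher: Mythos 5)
Your proposal is correct and follows essentially the same route as the paper's proof: a pointwise Lipschitz bound $r_n\bbd_\infty(S(C_n),S(C))\leq \kappa\cdot r_n\bbd_\infty(C_n,C)$, monotonicity of $\mathbb{P}$, and then the limit superior via convergence in distribution. You merely make explicit two steps the paper leaves implicit --- deriving the Hausdorff-distance bound from the pair of Lipschitz inclusions, and invoking the closed-set Portmanteau inequality --- both of which are exactly the right justifications.
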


\begin{proof}
Lipschitz continuity of $S$ gives $r_n\bbd_\infty(S(C_n),S(C)) \leq \kappa\cdot r_n\bbd_\infty(C_n,C)$. Thus $\mathbb{P}(r_n\bbd_\infty(S(C_n),S(C)) \geq u) \leq \mathbb{P}(\kappa\cdot r_n\bbd_\infty(C_n,C) \geq u)$.  The limit superior of both sides gives the result since $r_n\bbd_\infty(C_n,C) \rightarrow w$ in distribution. \qed
\end{proof}

\begin{remark}
The Delta method relates asymptotic distributions of random variables under differentiable functions \cite{bickel2006}, and the intuition is the derivative is used as a local approximation of the function.  The above result demonstrates one instance where Lipschitz continuity can be used as a local approximation for set-valued mappings.
\end{remark}

Though $\ominus$ does not generally invert $\oplus$, there is one special case when inversion is possible. If $C, D$ are compact convex sets, then $(C\oplus D)\ominus D = C$ \cite{schneider1993}.  Using this property, we describe a new central limit theorem for random sets with a particular structure that is useful for statistical applications.  Specifically, this result applies to randomly translated sets (\textsf{RaTS}), which are random sets of the form $C = K \oplus \xi$, where $K$ is a deterministic compact convex set, and $\xi$ is a (vector-valued) random variable.


\begin{theorem}[Central Limit Theorem for \textsf{RaTS}]
Suppose the probability space is nonatomic, and that $X, X_i$, $i \geq 1$, are i.i.d. random sets with $X_i = K \oplus \xi_i$, where $K$ is a deterministic compact convex set and $\xi,\xi_i$, $i\geq 1$, are i.i.d. (vector-valued) random variables with zero mean and finite variance. Then
\begin{equation}
\textstyle\sqrt{n}\cdot((\frac{1}{n}\bigoplus_{i=1}^n X_i)\ominus\mathbb{E}(X))\rightarrow\mathcal{N}(0,\mathbb{E}(\xi\xi^\mathsf{T} ))
\end{equation}
in distribution, where $\mathcal{N}(0,\mathbb{E}(\xi\xi^\mathsf{T} ))$ is a jointly Gaussian random variable with zero mean and covariance matrix given by $\mathbb{E}(\xi\xi^\mathsf{T} )$.
\end{theorem}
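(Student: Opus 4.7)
The plan is to reduce the statement to the classical multivariate central limit theorem by exploiting two algebraic simplifications: (i) that Minkowski summation collapses on a convex set, and (ii) that Minkowski difference inverts Minkowski sum on compact convex sets (a property already invoked in the paragraph preceding the theorem). If these simplifications go through, the random set on the left-hand side reduces to a random singleton whose sole element is $\sqrt{n}\,\bar\xi_n$, where $\bar\xi_n = \frac{1}{n}\sum_{i=1}^n \xi_i$, and the conclusion follows from the usual CLT for i.i.d.\ vectors.

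First I would compute $\mathbb{E}(X)$. Writing $X = K \oplus \{\xi\}$ and applying parts (a) and (b) of the preceding proposition (noting that $K$ is deterministic and convex, so $\mathbb{E}(K) = \mathrm{co}(K) = K$, and that $\mathbb{E}(\{\xi\}) = \{0\}$ since $\xi$ has zero mean), I obtain $\mathbb{E}(X) = K$. Next I would simplify $\bigoplus_{i=1}^n X_i$. Distributing the Minkowski sum gives
\begin{equation}
\textstyle\bigoplus_{i=1}^n X_i = \bigl(\bigoplus_{i=1}^n K\bigr) \oplus \bigl\{\sum_{i=1}^n \xi_i\bigr\}.
\end{equation}
A short induction on the standard fact $K \oplus K = 2K$ for convex $K$ (one inclusion uses convexity to write $(k_1+k_2)/2 \in K$, the other is immediate) gives $\bigoplus_{i=1}^n K = nK$. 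Dividing through by $n$, we get $\frac{1}{n}\bigoplus_{i=1}^n X_i = K \oplus \{\bar\xi_n\}$.

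The third step is to peel off $\mathbb{E}(X) = K$ using Minkowski difference. Since $K$ is compact and convex and $\{\bar\xi_n\}$ is trivially compact and convex, the inversion identity $(C \oplus D) \ominus D = C$ for compact convex sets (from \cite{schneider1993}, cited in the paragraph preceding the theorem) yields
\begin{equation}
\textstyle\bigl(\tfrac{1}{n}\bigoplus_{i=1}^n X_i\bigr) \ominus \mathbb{E}(X) = (K \oplus \{\bar\xi_n\}) \ominus K = \{\bar\xi_n\}.
\end{equation}
Scaling by $\sqrt{n}$ and identifying singletons with vectors (so that convergence in distribution of singletons is just convergence in distribution of their unique elements) reduces the claim to $\sqrt{n}\,\bar\xi_n \to \mathcal{N}(0, \mathbb{E}(\xi\xi^{\mathsf{T}}))$ in distribution, which is the classical multivariate CLT.

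There is no serious obstacle in this argument; the only point requiring any care is justifying the collapse $\bigoplus_{i=1}^n K = nK$, which crucially uses convexity of $K$ (without convexity the Minkowski sum is strictly larger than $nK$ in general) and which is what makes the difference operation invert cleanly. Everything else is just bookkeeping of previously established properties of $\mathbb{E}(\cdot)$, $\oplus$, and $\ominus$.
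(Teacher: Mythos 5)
Your proof is correct and follows essentially the same route as the paper: reduce $(\frac{1}{n}\bigoplus_{i=1}^n X_i)\ominus\mathbb{E}(X)$ to the singleton $\{\frac{1}{n}\sum_{i=1}^n\xi_i\}$ via the cancellation $(C\oplus D)\ominus D=C$ for compact convex sets, then invoke the classical multivariate CLT. The paper states this reduction in one line; you have merely filled in the supporting details ($\mathbb{E}(X)=K$ and $\bigoplus_{i=1}^n K=nK$ by convexity), which the paper leaves implicit.
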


\begin{proof}
Since $(\frac{1}{n}\bigoplus_{i=1}^n X_i)\ominus\mathbb{E}(X) = ((\frac{1}{n}\sum_{i=1}^n\xi_i) \oplus K) \ominus K = \frac{1}{n}\sum_{i=1}^n\xi_i$, the result follows by the classical central limit theorem \cite{bickel2006}.\qed
\end{proof}

The benefit of this new formulation of the central limit theorem is that it has a clear geometrical interpretation like the classical central limit theorem for random variables, but unfortunately this result only applies to the specific class of \textsf{RaTS}.  

%
%
%

\section{Kernel Regression}

We will construct a nonparametric estimator for set-valued functions using an approach that can be viewed as a natural generalization of kernel regression methods for functions \cite{aswani2011,aswani2013,bickel1982,noda1976,wand1994}.  These techniques are considered nonparametric because, in contrast to parametric models with a finite number of parameters, the number of parameters in nonparametric models increases as the amount of data increases.

\subsection{Problem Setup}
\label{sec:krps}

Consider a Lipschitz continuous set-valued function $S(u) : U \rightrightarrows \mathbb{R}^q$ with random samples $(X_i, S_i)\in U\times\mathbb{R}^q$ for $i=1,\ldots,n$, where: $U\subseteq\mathbb{R}^d$ is a convex compact set; $S(u)$ is a convex compact set for each $u\in U$; $X_i$ are i.i.d. (vector-valued) random variables with a Lipschitz continuous density function $f_X$ that has the property $f_X(u)>0$ for $u\in U$; and $S_i = S(X_i) \oplus W_i$ with $W_i$ i.i.d. (vector-valued) random variables that have zero mean $\mathbb{E}(W)=0$ and finite variance $\|\mathbb{E}(WW^\mathsf{T} )\| < +\infty$.  The problem is to estimate $S(u)$ at any $u\in U$ using the above described samples, and we need convexity of $U$ to ensure its tangent cone is derivable at $\partial U$ \cite{rockafellar2009}; however, our results will hold for all $u\in\mathrm{int}(U)$ unconditional of any such regularity assumptions.

\subsection{Kernel Functions}

Kernel regression is so named because these approaches use kernel functions $\varphi : \mathbb{R}\rightarrow\mathbb{R}$, which are functions that are non-negative, bounded, even (i.e., $\varphi(-u) = \varphi(u)$), and have finite support (i.e., there is a constant $\eta \in (0,1)$ such that $\varphi(u) > 0$ when $|u| \leq \eta$, and $\varphi(u) = 0$ for $|u| \geq 1$).  One example of a kernel function is the indicator function $\varphi(u) = (1/2)\cdot\mathbf{1}(|u|< 1)$, and another example is the Epanechnikov kernel $\varphi(u) = (3/4)\cdot(1-u^2)\cdot\mathbf{1}(u \leq 1)$.  Notationally, it is useful to define the family of kernel functions $\varphi_h(u) = h^{-d}\varphi(\|u\|/h)$ and the function $\gamma(u) = \int_{z\in T_U(u)} \varphi_1(z)dz$, where $T_U(u)$ is the tangent cone of $U$ at the point $u$.  (Note $\gamma(u)$ is strictly greater than zero and finite because of the assumptions.) We first prove a lemma about $\varphi_h(u)$:

\begin{lemma}
\label{lem:kernel}
If $h = n^{-1/(d+4)}$, then for $u\in U$ we have
\begin{enumerate}[$\mathrm{(\mathtt{\alph*})}$, leftmargin=2.5em]
\item $\aslim_n\frac{1}{n}\sum_{i=1}^n \varphi_h(X_i-u) = \gamma(u)\cdot f_X(u)$ \label{lem:kernel_den}
\item $\vphantom{\displaystyle\sum}$$\aslim_n \frac{1}{n}\sum_{i=1}^n \varphi_h(X_i-u)\cdot w_i = 0$ \label{lem:kernel_noise}
\item $\aslim_n \frac{1}{n}\sum_{i=1}^n \varphi_h(X_i-u)\cdot\|X_i-u\| = 0$ \label{lem:kernel_ball}
\end{enumerate}
\end{lemma}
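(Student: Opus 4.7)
The plan is to prove each of the three statements via a bias--variance decomposition adapted to the triangular array structure of the summands (which depend on $n$ through $h = h_n$). For each part, write
\[
\tfrac{1}{n}\sum_{i=1}^n Y_{n,i} \;=\; \mathbb{E}[Y_{n,1}] \;+\; \Big(\tfrac{1}{n}\sum_{i=1}^n Y_{n,i} - \mathbb{E}[Y_{n,1}]\Big),
\]
compute the limit of the bias $\mathbb{E}[Y_{n,1}]$, and then show the fluctuation term converges to zero almost surely via a concentration inequality combined with Borel--Cantelli.

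For part (\texttt{a}), the bias calculation is the heart of the argument. Substituting $z = (x-u)/h$ in $\mathbb{E}[\varphi_h(X-u)] = \int_U h^{-d}\varphi(\|x-u\|/h) f_X(x)\,dx$ yields
\[
\mathbb{E}[\varphi_h(X-u)] \;=\; \int_{(U-u)/h \,\cap\, \mathbb{B}} \varphi(\|z\|)\, f_X(u+hz)\,dz.
\]
As $h \to 0$, Lipschitz continuity of $f_X$ gives $f_X(u+hz) \to f_X(u)$ uniformly over $\mathbb{B}$, and convexity of $U$ ensures that $(U-u)/h$ increases monotonically (with boundary of Lebesgue measure zero) to the tangent cone $T_U(u)$; dominated convergence then delivers $\mathbb{E}[\varphi_h(X-u)] \to f_X(u)\,\gamma(u)$. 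For the fluctuation, the summands are bounded by $h^{-d}\|\varphi\|_\infty$ and have variance $O(h^{-d})$, so Bernstein's inequality gives a tail bound of the form $2\exp(-c\,n h^d \epsilon^2)$. With $h = n^{-1/(d+4)}$ one has $n h^d = n^{4/(d+4)}$, which grows polynomially, so the bounds are summable in $n$ and Borel--Cantelli delivers the almost sure convergence.

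Parts (\texttt{b}) and (\texttt{c}) follow the same template. For (\texttt{b}), independence of $W_i$ from $X_i$ together with $\mathbb{E}(W)=0$ makes the bias vanish identically. Because $W_i$ is only assumed to have finite second moment, Bernstein cannot be applied directly to $\varphi_h(X_i-u)\,W_i$; I will truncate $W_i$ at a slowly growing threshold $M_n$, apply Bernstein to the bounded piece, and control the unbounded tail via Chebyshev and the Borel--Cantelli lemma. For (\texttt{c}), the same change of variables extracts an extra factor of $h$ from $\|X-u\|$, giving bias $\mathbb{E}[\varphi_h(X-u)\|X-u\|] = O(h) \to 0$; the fluctuation term has variance $O(h^{2-d})$ and summand bound $O(h^{1-d})$, both of which only sharpen the Bernstein--Borel--Cantelli argument from part (\texttt{a}).

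I expect the main obstacle to be the tangent-cone step in part (\texttt{a}). For $u$ in the interior of $U$, the set $(U-u)/h$ eventually covers $\mathbb{B}$ entirely and $\gamma(u)$ reduces to $\int_{\mathbb{B}}\varphi(\|z\|)\,dz$; but at a boundary point the convex cone $T_U(u)$ is strictly smaller, and this is exactly what $\gamma(u)$ captures. Justifying the dominated-convergence step uniformly across this geometric transition---and confirming pointwise a.e. convergence $\mathbf{1}_{(U-u)/h}(z) \to \mathbf{1}_{T_U(u)}(z)$---is precisely where the convexity of $U$ and the derivability of its tangent cone (noted in Section~\ref{sec:krps}) are genuinely used.
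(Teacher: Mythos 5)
Your decomposition and your bias computation coincide with the paper's: it likewise changes variables to $z=(x-u)/h$, uses the Lipschitz constant of $f_X$ to bound $|f_X(u+hz)-f_X(u)|$ by $\kappa h\|z\|$, and invokes convergence of $(U\oplus\{-u\})/h$ to the tangent cone $T_U(u)$ (via Proposition 6.2 and Theorem 4.10 of Rockafellar--Wets) to identify the limit $\gamma(u)f_X(u)$; your monotone-union argument for convex $U$ is a legitimate substitute. Where you genuinely differ is the fluctuation term: the paper checks $\sum_{i=1}^n n^{-2}\mathrm{var}(\cdot)\le c/(nh^d)$ and appeals to Kolmogorov's strong law, whereas you use Bernstein plus Borel--Cantelli. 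Your route has the merit of engaging directly with the triangular-array structure (the summands change with $n$ through $h$, which a bare appeal to Kolmogorov's SLLN elides), and for parts (a) and (c) it closes cleanly: the exponents are of order $nh^d=n^{4/(d+4)}$ and $nh^{d-1}=n^{5/(d+4)}$ respectively, both summable.

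The gap is in part (b). The setup assumes only $\mathbb{E}(W)=0$ and finite variance, and under that assumption your truncate--Bernstein--Chebyshev scheme does not close for all $d$. If you truncate at a slowly growing $M_n$ so that Bernstein keeps a summable exponent, the tail term $\frac{1}{n}\sum_i\varphi_h(X_i-u)W_i\mathbf{1}(|W_i|>M_n)$ has variance of order $\tau(M_n)/(nh^d)$ with $\tau(M)=\mathbb{E}[W^2\mathbf{1}(|W|>M)]$, so Chebyshev yields probabilities of order $n^{-4/(d+4)}\tau(M_n)$; since $4/(d+4)<1$ and $\tau(M_n)\to 0$ with no guaranteed rate, these need not be summable and Borel--Cantelli fails along the full sequence. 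The standard repair --- per-index truncation at $|W_i|\le i^{1/2}$, which makes the exceedances a.s. finite in number with each contributing only $O(n^{-4/(d+4)})$ to the average --- inflates the Bernstein sup bound to $h^{-d}n^{1/2}$ and degrades the exponent to order $n^{(4-d)/(2(d+4))}$, which is not summable once $d\ge 4$. So as written, (b) goes through only for $d\le 3$; to cover general $d$ you need either a stronger moment or boundedness hypothesis on $W$, a maximal-inequality or subsequence-plus-interpolation argument for the tail, or a purely variance-based law of large numbers of the kind the paper invokes, which requires only second moments.
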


\begin{proof}
We prove these three results by verifying the hypothesis of Kolmogorov's strong law of large numbers holds in each case, then applying this law of large numbers, and finally computing the expectation of the corresponding quantity in each case. To prove the first result, observe that
\begin{equation}
\textstyle\lim_n\sum_{i=1}^n n^{-2}\cdot\mathrm{var}\big(\varphi_h(X_i-u)\big) \leq \lim_n c/\big(nh^d\big) < \infty,
\end{equation}
where the first inequality holds for some constant $c\in\mathbb{R}^+$ because $\varphi_h(X_i-u)$ is bounded and nonzero with probability at most $s\cdot h^d$ for some constant $s\in\mathbb{R}^+$; and the second inequality holds because $nh^d = n^{4/(d+4)}$.  The finiteness of the above summation means we can apply Kolmogorov's strong law of large numbers, which gives $\aslim_n \frac{1}{n}\sum_{i=1}^n \varphi_h(X_i-u) - \mathbb{E}(\frac{1}{n}\sum_{i=1}^n \varphi_h(X_i-u)) = 0$.  Our next step is to compute this expectation.  Note that
\begin{equation}
\begin{aligned}
\textstyle\mathbb{E}(\frac{1}{n}\sum_{i=1}^n \varphi_h(X_i-u)) & \textstyle= \int_{x\in\mathbb{R}^d} \varphi_h(x-u)\cdot f_X(x)dx \\
&\textstyle= \int_{z\in\mathbb{B}\cap(U\oplus\{-u\})/h} \varphi_1(z)\cdot f_X(u + hz)dz
\end{aligned}
\end{equation}
where in the last line we made the change of variables $z = (x-u)/h$.  Let $R(h) = (\mathbb{B}\cap(U\oplus\{-u\})/h)\setminus T_U(u)$ and $S(h) = (\mathbb{B}\cap T_U(u))\setminus(U\oplus\{-u\})/h$. So we have
\begin{equation}
\label{eqn:limres1}
\begin{aligned}
&\textstyle\big|\mathbb{E}(\frac{1}{n}\sum_{i=1}^n \varphi_h(X_i-u)) - f_X(u)\cdot\int_{z\in \mathbb{B}\cap T_U(u)} \varphi_1(z)dz\big| \\
&\qquad \leq\textstyle \int_{z\in\mathbb{B}} \varphi_1(z)\cdot \big|f_X(u + hz)-f_X(u)\big|dz + \int_{z\in R(h)\cup S(h)}|\varphi_1(z)f_X(u+hz)|dz\\
&\qquad \leq\textstyle \int_{z\in\mathbb{B}} \varphi_1(z)\cdot \kappa h\|z\|dz + s\int_{z\in R(h)}dz + s\int_{z\in S(h)}dz\\
&\qquad \leq\textstyle h\cdot\kappa\int_{z\in\mathbb{B}} \varphi_1(z) dz + s\int_{z\in R(h)}dz + s\int_{z\in S(h)}dz\\
\end{aligned}
\end{equation}
where $\kappa\in\mathbb{R}_+$ is the Lipschitz constant of the density $f_X(u)$, and $s\in\mathbb{R}_+$ is a constant that exists by continuity of $f_X(u)$.  Next note $s\int_{z\in R(h)}dz + s\int_{z\in S(h)}dz \rightarrow 0$ as $h\rightarrow 0$ by Proposition 6.2 and Theorem 4.10 of \cite{rockafellar2009}.  Thus taking the limit of (\ref{eqn:limres1}) gives $\lim_n \mathbb{E}(\frac{1}{n}\sum_{i=1}^n \varphi_h(X_i-u)) = f_X(u)\cdot\int_{z\in\mathbb{R}^d} \varphi_1(z)dz$.  This proves the first result when combined with the implication of Kolmogorov's strong law of large numbers in our setting, and after noting $\gamma(u) = \int_{z\in \mathbb{B}\cap T_U(u)} \varphi_1(z)dz$ since $\varphi_1(u) = 0$ for $\|u\| > 1$.

For the proof of the second result, let $\langle w\rangle_j$ denote the $j$-th component of the vector $w$.  Next observe that
\begin{equation}
\textstyle\lim_n\sum_{i=1}^n n^{-2}\cdot\mathrm{var}\big(\varphi_h(X_i-u)\cdot\langle w_i\rangle_j\big) \leq \lim_nc\cdot\mathrm{var}\big(\langle w\rangle_j\big)/\big(nh^d\big) < \infty,
\end{equation}
where the first inequality holds for some constant $c\in\mathbb{R}^+$ because the $w_i$ have zero mean and because $\varphi_h(X_i-u)$ is bounded and nonzero with probability at most $s\cdot h^d$ for some constant $s\in\mathbb{R}^+$; and the second inequality holds because $nh^d = n^{4/(d+4)}$.  The finiteness of the above summation means Kolmogorov's strong law of large numbers gives $\aslim_n \frac{1}{n}\sum_{i=1}^n \varphi_h(X_i-u)\cdot\langle w_i\rangle_j - \mathbb{E}(\frac{1}{n}\sum_{i=1}^n \varphi_h(X_i-u)\cdot\langle w_i\rangle_j) = 0$.  But the $w_i$ are zero mean, and so we have that $\mathbb{E}(\frac{1}{n}\sum_{i=1}^n \varphi_h(X_i-u)\cdot\langle w_i\rangle_j) = 0$.

To prove the third result, observe that
\begin{equation}
\textstyle\lim_n\sum_{i=1}^n n^{-2}\cdot\mathrm{var}\big(\varphi_h(X_i-u)\cdot\|X_i-u\|\big) \leq \lim_n c/\big(nh^d\big) < \infty,
\end{equation}
where the first inequality holds for some constant $c\in\mathbb{R}^+$ because $U$ is a compact set and because $\varphi_h(X_i-u)$ is bounded and nonzero with probability at most $s\cdot h^d$ for some constant $s\in\mathbb{R}^+$; and the second inequality holds because $nh^d = n^{4/(d+4)}$.  The finiteness of the above summation means we can apply Kolmogorov's strong law of large numbers, which gives $\aslim_n \frac{1}{n}\sum_{i=1}^n \varphi_h(X_i-u)\cdot\|X_i-u\| - \mathbb{E}(\frac{1}{n}\sum_{i=1}^n \varphi_h(X_i-u)\cdot\|X_i-u\|) = 0$.  Our next step is to compute this expectation.  Note that
\begin{equation}
\begin{aligned}
\textstyle\mathbb{E}(\frac{1}{n}\sum_{i=1}^n \varphi_h(X_i-u)\cdot\|X_i-u\|) & \textstyle\leq \int_{x\in\mathbb{R}^d} \varphi_h(x-u)\cdot\|x-u\|\cdot f_X(x)dx \\
&\textstyle\leq h\int_{z\in\mathbb{R}^d} \varphi_1(z)\cdot z\cdot f_X(u + hz)dz\\
&\textstyle\leq c\cdot h = c\cdot n^{-1/(d+4)}
\end{aligned}
\end{equation}
where the second line makes the change of variables $z = (x-u)/h$, and the third line holds for some constant $c\in\mathbb{R}^+$ because the kernel has finite support and the density is continuous.  The above expectation is non-negative, and so $\lim_n \mathbb{E}(\frac{1}{n}\sum_{i=1}^n \varphi_h(X_i-u)\cdot\|X_i-u\|) = 0$.  This proves the third result when combined with the outcome of Kolmogorov's strong law of large numbers.
\qed
\end{proof}


\subsection{Kernel Regression Estimator}

We define a kernel regression estimate of $S$ at the point $u$ to be
\begin{equation}
\label{eqn:kre}
\widehat{S}(u) = \textstyle\Big[\frac{1}{n}\bigoplus_{i=1}^n \varphi_h(X_i-u)\cdot S_i\Big]\cdot\Big[\frac{1}{n}\sum_{i=1}^n \varphi_h(X_i-u)\Big]^{-1}
\end{equation}
The following theorem proves the strong pointwise consistency of this estimator.

\begin{theorem}
If $h = n^{-1/(d+4)}$, then $\aslim_n\widehat{S}(u) =S(u)$ for $u \in U$.
\end{theorem}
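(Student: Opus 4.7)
My plan is to decompose $\widehat{S}(u)$ into a numerator $N_n := \tfrac{1}{n}\bigoplus_{i=1}^n \varphi_h(X_i - u)\cdot S_i$ and a denominator $D_n := \tfrac{1}{n}\sum_{i=1}^n \varphi_h(X_i - u)$, treat each separately, and then recombine. Lemma \ref{lem:kernel}(\ref{lem:kernel_den}) gives $\aslim_n D_n = \gamma(u) f_X(u) > 0$, and Theorem \ref{thm:smt} applied to the continuous scalar-inversion map yields $\aslim_n D_n^{-1} = (\gamma(u) f_X(u))^{-1}$. For the numerator, substituting the model $S_i = S(X_i)\oplus W_i$ and distributing scalar multiplication over the Minkowski sum produces $N_n = A_n \oplus \{v_n\}$, where $A_n := \tfrac{1}{n}\bigoplus_{i=1}^n \varphi_h(X_i - u)\cdot S(X_i)$ and $v_n := \tfrac{1}{n}\sum_{i=1}^n \varphi_h(X_i - u)\cdot W_i$; Lemma \ref{lem:kernel}(\ref{lem:kernel_noise}) gives $\aslim_n v_n = 0$, so the noise singleton is contained in a shrinking ball.

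The main technical step is showing $\aslim_n A_n = \gamma(u) f_X(u)\cdot S(u)$. I apply Lipschitz continuity in both directions, $S(X_i) \subseteq S(u) \oplus \kappa\|X_i - u\|\mathbb{B}$ and $S(u) \subseteq S(X_i) \oplus \kappa\|X_i - u\|\mathbb{B}$. Multiplying each by the nonnegative scalar $\varphi_h(X_i - u)$, taking the Minkowski sum over $i$, and dividing by $n$, the convexity of $S(u)$ and $\mathbb{B}$ collapses the weighted Minkowski sums via the identity $\bigoplus_i \alpha_i\cdot S(u) = (\sum_i \alpha_i)\cdot S(u)$ (and likewise for $\mathbb{B}$), producing the fuzzed sandwich
\begin{equation*}
B_n \;\subseteq\; A_n \oplus r_n\mathbb{B} \;\subseteq\; B_n \oplus 2r_n\mathbb{B},
\end{equation*}
where $B_n := D_n\cdot S(u)$ and $r_n := \kappa\cdot \tfrac{1}{n}\sum_{i=1}^n \varphi_h(X_i - u)\|X_i - u\|$. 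Lemma \ref{lem:kernel}(\ref{lem:kernel_ball}) gives $\aslim_n r_n = 0$ and Corollary \ref{cor:minklim}(\ref{cor:minklim_prod}) gives $\aslim_n B_n = \gamma(u) f_X(u)\cdot S(u)$, so Corollary \ref{cor:minklim}(\ref{cor:minklim_osum}) yields $\aslim_n (B_n \oplus 2r_n\mathbb{B}) = \gamma(u) f_X(u)\cdot S(u)$. The sandwich lemma then forces $\aslim_n (A_n \oplus r_n\mathbb{B}) = \gamma(u) f_X(u)\cdot S(u)$, after which Corollary \ref{cor:invoplus} strips off the shrinking ball to give $\aslim_n A_n = \gamma(u) f_X(u)\cdot S(u)$. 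Applying Corollary \ref{cor:invoplus} once more, using the observation $N_n \oplus \{-v_n\} = A_n$ together with $\{-v_n\} \subseteq \|v_n\|\mathbb{B}$ and $\|v_n\|\to 0$ a.s., lifts this to $\aslim_n N_n = \gamma(u) f_X(u)\cdot S(u)$. Recombining numerator and denominator through Corollary \ref{cor:minklim}(\ref{cor:minklim_inv}) then delivers $\aslim_n \widehat{S}(u) = (\gamma(u) f_X(u))^{-1}\cdot\gamma(u) f_X(u)\cdot S(u) = S(u)$.

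The principal obstacle is the non-invertibility of the Minkowski sum: Lipschitz continuity delivers only inflated one-sided inclusions on $A_n$ rather than a genuine two-sided sandwich on $A_n$ itself, so one is forced to reason about the enlarged sets $A_n \oplus r_n\mathbb{B}$. Corollary \ref{cor:invoplus}, which promotes convergence of $A_n \oplus r_n\mathbb{B}$ back to convergence of $A_n$ once $r_n \to 0$, is precisely the tool that closes this gap and is the conceptual heart of the argument. Verifying the deterministic-boundedness hypotheses of Corollary \ref{cor:minklim} is routine: $S(\cdot)$ is uniformly bounded on the compact domain $U$ by Lipschitz continuity, and after restricting to the a.s. event on which the scalar limits of Lemma \ref{lem:kernel} hold, the relevant sequences of sets are eventually contained in deterministic compact sets along each sample path.
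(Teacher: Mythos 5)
Your proof is correct and follows essentially the same route as the paper's: a two-sided Lipschitz sandwich collapsed by convexity of $S(u)$ and $\mathbb{B}$, the kernel lemma, the sandwich lemma, and Corollary \ref{cor:invoplus} to strip the shrinking perturbations before inverting the denominator. The only cosmetic difference is that you peel off the noise translation $\{v_n\}$ first and sandwich $A_n\oplus r_n\mathbb{B}$, whereas the paper keeps the $w_i$ inside the Minkowski averages and sandwiches $\frac{1}{n}\bigoplus_{i}\varphi_h(X_i-u)\cdot\big(S_i\oplus\kappa\|X_i-u\|\mathbb{B}\big)$ directly.
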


\begin{proof}
Let $\kappa \in \mathbb{R}_+$ be the Lipschitz constant of $S$, and note that by Lipschitz continuity we have
\begin{multline}
\label{eqn:lipkern}
\textstyle\frac{1}{n}\bigoplus_{i=1}^n \varphi_h(X_i-u)\cdot\big(S(u)\oplus w_i\big) \subseteq \\
\textstyle\frac{1}{n}\bigoplus_{i=1}^n \varphi_h(X_i-u)\cdot\big(S_i \oplus \kappa\|X_i-u\|\mathbb{B}\big)\subseteq\\
\textstyle\frac{1}{n}\bigoplus_{i=1}^n \varphi_h(X_i-u)\cdot\big(S(u) \oplus 2\kappa\|X_i-u\|\mathbb{B}\oplus w_i\big)
\end{multline}
Corollary \ref{cor:minklim}\ref{cor:minklim_prod} and Lemma \ref{lem:kernel}\ref{lem:kernel_den} give $\aslim_n \frac{1}{n}\bigoplus_{i=1}^n \varphi_h(X_i-u)\cdot S(u) = \gamma(u)\cdot f_X(u)\cdot S(u)$, and Corollary \ref{cor:minklim}\ref{cor:minklim_osum} and Lemma \ref{lem:kernel}\ref{lem:kernel_noise} yield $\aslim_n\frac{1}{n}\bigoplus_{i=1}^n \varphi_h(X_i-u)\cdot(S(u)\oplus w_i) = \gamma(u)\cdot f_X(u)\cdot S(u)$.  Corollary \ref{cor:minklim}\ref{cor:minklim_prod} and Lemma \ref{lem:kernel}\ref{lem:kernel_ball} give $\aslim_n \frac{1}{n}\bigoplus_{i=1}^n \varphi_h(X_i-u)\cdot\kappa\|X_i-u\|\mathbb{B} = 0$, and so Corollary \ref{cor:minklim}\ref{cor:minklim_osum} implies $\aslim_n\frac{1}{n}\bigoplus_{i=1}^n \varphi_h(X_i-u)\cdot\big(S(u) \oplus 2\kappa\|X_i-u\|\mathbb{B}\oplus w_i\big) = \gamma(u)\cdot f_X(u)\cdot S(u)$. So applying the sandwich lemma to (\ref{eqn:lipkern}) yields $\aslim_n \textstyle\frac{1}{n}\bigoplus_{i=1}^n \varphi_h(X_i-u)\cdot\big(S_i \oplus \kappa\|X_i-u\|\mathbb{B}\big) = \gamma(u)\cdot f_X(u)\cdot S(u)$.  Corollary \ref{cor:invoplus} gives $\aslim_n \frac{1}{n}\bigoplus_{i=1}^n \varphi_h(X_i-u)\cdot S_i = \gamma(u)\cdot f_X(u)\cdot S(u)$. Finally, using Corollary \ref{cor:minklim}\ref{cor:minklim_inv} and Lemma \ref{lem:kernel}\ref{lem:kernel_den} imply that $\aslim_n\widehat{S}(u) =S(u)$.
\qed
\end{proof}


\subsection{Algorithms to Compute Kernel Regression Estimator}

The statistical consistency of our kernel regression estimator is a theoretical result, and numerical computation of this estimator using the measured data $(u_i, s_i)$ for $i=1,\ldots,n$ needs some discussion.  The key point is that the corresponding algorithm used to compute the estimator depends on the representation of the sets $s_i$.  Since the random sets $S_i$ are \textsf{RaTS}, we only need to consider different representations of convex sets.  Moreover, we focus our discussion on polytope representations since any compact convex set can be approximated arbitrarily well by polytopes \cite{schneider1993}.  

If the sets $s_i$ are each represented by polynomial time membership oracles, then 
\begin{equation}
\textstyle\frac{1}{n}\bigoplus_{i=1}^n \varphi_h(x_i-u)\cdot s_i = \Big\{\frac{1}{n}\bigoplus_{i=1}^n \varphi_h(x_i-u)\cdot t_i : t_i \in s_i\text{ for } i =1,\ldots,n\Big\},
\end{equation}
and so membership in the Minkowski sum can be determined in polynomial time.  Polynomial time membership oracles exist for $s_i$ in a known compact set $G$, with a self-concordant barrier function for $G$ and the functions defining $s_i$ \cite{nesterov1994}: The measurement of $s_i$ would consist of the function parameters defining $s_i$, and set membership is determined by using interior point to solve a feasibility problem.  Examples include polytopes $s_i = \{t_i : a_i t_i \leq b_i\}$, with measured data $a_i,b_i$; second-order cone sets $s_i = \{t_i: \|a_{i,j} t_i + b_{i,j}\|_2 \leq c_{i,j}^\textsf{T}t_i + d_{i,j} \text{ for } j = 1,\ldots,k\}$, with measured data $a_{i,j},b_{i,j},c_{i,j},d_{i,j}$; and combinations thereof.  Other examples can be found in \cite{nesterov1994}.

Next suppose the sets $s_i$ are each represented by the zonotopes $s_i = \bigoplus_{k=1}^p w_{ik}\cdot z_{k}$, where $w_{ik}$ are weights and $z_{k}$ are vectors, which are polytopes defined as the Minkowski sum of vectors.  Restated, the observations are the $w_{ik}$ and $z_{k}$.  Then 
\begin{equation}
\textstyle\frac{1}{n}\bigoplus_{i=1}^n \varphi_h(x_i-u)\cdot s_i = \bigoplus_{k=1}^p \big[\textstyle\frac{1}{n}\sum_{i=1}^n \varphi_h(x_i-u)\cdot w_{ik}\big]\cdot z_k,
\end{equation}
and so the Minkowski sum is polynomial time computable for this representation.

Lastly, suppose the sets $s_i$ are represented by the convex hull of a finite set of $p_i$ vertices, meaning that $s_i = \mathrm{co}(\{v_{i1},\ldots,v_{ip_i}\})$.  In this setting the measurements are the vertices of each set $s_i$, and the Minkowski sum is given by
\begin{multline}
\label{eqn:vms}
\textstyle\frac{1}{n}\bigoplus_{i=1}^n \varphi_h(x_i-u)\cdot s_i = \mathrm{co}\big(\big\{\frac{1}{n}\sum_{i=1}^n \varphi_h(x_i-u)\cdot v_{ij_i} : \\\text{ for } j_i = 1,\ldots,p_i\text{ and }i=1,\ldots,n\big\}\big).
\end{multline}
This is a polynomial time computation since the number of vertices is finite.

\subsection{Numerical Example}

\begin{figure}
\centering
\includegraphics[trim=42pt 4pt 35pt 5pt,clip=true]{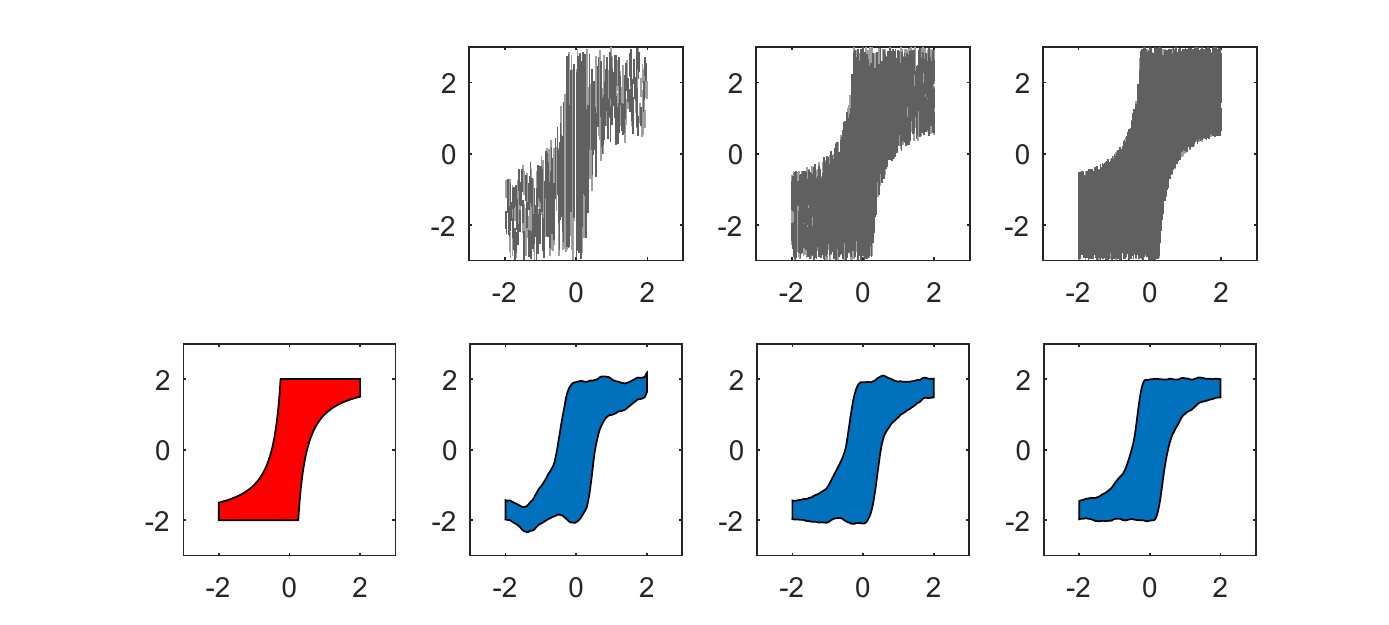}
\caption{The $x$-axes on the plots are $u$.  The top row from left to right shows the noisy measurements $(u_i,s_i)$ for $n = 10^2$, $n = 10^3$, and $n = 10^4$ data points, respectively, and the bottom row shows the set-valued function $S(u)$ being estimated and the corresponding estimates $\hat{S}(u)$ from our kernel regression estimator.\label{fig:svkr}}
\end{figure}

We conclude our discussion on kernel regression of set-valued functions with a numerical example to visually demonstrate the estimation problem being solved by our estimator.  Consider the set-valued function in the bottom-left of Fig. \ref{fig:svkr}, given by 
\begin{equation}
S(u) = \begin{cases}
\big[\hspace{0.67cm}-2, -\frac{2u+1}{u}+2\big], & \text{if } u \in \big[-2,-\frac{1}{4}\big]\\
\big[\hspace{0.67cm}-2,\hspace{1.32cm}2\big],&\text{if } u \in\big[-\frac{1}{4},\hspace{0.23cm}\frac{1}{4}\big]\\
\big[\frac{4u-1}{u}-2,\hspace{1.31cm}2\big],&\text{if }u\in\big[\hspace{0.36cm}\frac{1}{4},\hspace{0.27cm}2\big]
\end{cases}
\end{equation}
The $X_i$ variables have a $U(-2,2)$ distribution, and each measurement $s_i$ is in a vertex representation.  The noise $W_i$ has a $U(-1,1)$ distribution, meaning its variance is $1/6$.  The top row of Fig. \ref{fig:svkr} shows measurements for $n = 10^2$, $n = 10^3$, and $n = 10^4$ data points, respectively; and the bottom row shows estimates computed by (\ref{eqn:kre}) and (\ref{eqn:vms}) with an Epanechnikov kernel.\footnote{Our code \url{http://ieor.berkeley.edu/~aaswani/code/ssvf.zip} runs in a few seconds.}  This example shows that as the amount of data increases, the estimates $\hat{S}(u)$ converge pointwise to the actual set-valued function $S(u)$.

\section{Inverse Approximate Optimization}

Inverse optimization involves computing parameters that make measured solutions optimal \cite{ahuja2001,aswani2015,bertsimas2015,chan2014,esfahani2015,keshavarz2011}.  In contrast, the inverse approximate optimization problem makes noisy measurements of suboptimal solutions, and the goal is to estimate the amount of suboptimality and to estimate the parameters of optimization problem generating the data.  In principle, the \textsf{VIA} \cite{bertsimas2015} and \textsf{KKT} \cite{keshavarz2011} estimators can provide estimates of the desired quantities; but we show their estimates are statistically inconsistent.  As a result, we construct an estimator for inverse approximate optimization, prove its statistical consistency, and then discuss some possible generalizations.

\subsection{Problem Setup}

Consider a parametric convex optimization problem
\begin{equation}
\label{eqn:fop}
\textstyle V(u,\theta) = \min_x\big\{f(x,u,\theta)\ \big|\ g(x,u,\theta) \leq 0\big\}
\end{equation}
in which $f,g$ are continuous functions that are convex in $x$ for each fixed value of $u$ and $\theta$, and assume that for all $u,\theta$ the constraint qualification there exists $x$ such that $g(x,u,\theta) < 0$ holds.  (Note this constraint representation is fully general since we can write $g=\max g_i$.)  We use the definition that $\epsilon$-optimal solutions are those in the set
\begin{multline}
\label{eqn:def_eps}
\textstyle S(u,\epsilon,\theta) = \earg\min_x\big\{f(x,u,\theta)\ \big|\ g(x,u,\theta) \leq 0\big\} = \\
\{x : f(x,u,\theta) \leq V(u,\theta) + \epsilon, g(x,u,\theta) \leq 0\}.
\end{multline}
Our results also apply when $\epsilon$-optimal solutions are defined as in (\ref{eqn:def_eps}) but with $g(x,u,\theta) \leq \epsilon$.  The difference is (\ref{eqn:def_eps}) does not allow any constraint violation, while the alternative definition allows $\epsilon$ constraint violation.  Note there are other notions of $\epsilon$-optimal solutions like distance to the KKT graph, but we do not consider these.

Now suppose $\epsilon$-optimal solutions of (\ref{eqn:fop}) generate random samples $(U_i,Y_i)\in D\times\mathbb{R}^p$ for $i=1,\ldots,n$, where: $U_i$ are i.i.d. (vector-valued) random variables distributed on the set $D\subseteq\mathbb{R}^d$; $Y_i = X_i +  W_i$, where $X_i$ are i.i.d. (vector-valued) random variables distributed on $S(U_i,\epsilon_0,\theta_0)$ with constants $\epsilon_0\in\mathbb{R}_+$ and $\theta_0\in\mathbb{R}^p$; and $W_i$ are i.i.d. (vector-valued) random variables with zero mean $\mathbb{E}(W_i)=0$ and distributed on a known convex set $W$ with finite support (which implies finite variance).  We also assume the densities of $W_i,X_i$ are strictly positive on the interior of their supports (i.e., $f_W(u) > 0$ for $u\in\mathrm{int}(W)$ and $f_X(u|U_i) > 0$ for $u \in \mathrm{int}(S(U_i,\epsilon_0,\theta_0))$).

The inverse approximate optimization problem is to estimate $(\epsilon_0,\theta_0)$ using the $(U_i,Y_i)$ for $i=1,\ldots,n$.  Note that we assume the functional forms of $f,g$ are fixed.  Let $E \subseteq\mathbb{R}_+$ be a known closed set such that $\epsilon_0\in E$, and let $\Theta\subseteq\mathbb{R}^p$ be a known compact set such that $\theta_0\in\Theta$; the intuition is that these sets represent prior knowledge that constrain the parameters and amount of solution suboptimality.  The choice $E = \mathbb{R}_+$ corresponds to a situation with no such prior knowledge on $\epsilon_0$, and the compactness assumption on $\Theta$ is not restrictive in practice because this set can be made arbitrarily large.  (Unbounded $\Theta$ can also be used when a compactification with certain technical properties exists \cite{bahadur1971some}.) A so-called \emph{identifiability condition} \cite{bickel2006} is also needed.  We assume that if $(\epsilon_0,\theta_0) \in E\times\Theta$ then $\mathbb{E}(d^2(Y_i,S(U_i,\epsilon,\theta)\oplus W)) > 0$ for all $\epsilon\in [0,\epsilon_0)$ and $\theta\in\Theta\setminus\{\theta_0\}$.  An identifiability condition (such as the one we have assumed) intuitively says that different parameters of the model produce different outputs.

\subsection{Inconsistency of Existing Estimators}

The \textsf{VIA} \cite{bertsimas2015} (which minimizes the first order suboptimality of the data) and \textsf{KKT} \cite{keshavarz2011} (which minimizes the KKT suboptimality of the data) estimators are statistically inconsistent for $\epsilon_0 = 0$ \cite{aswani2015}, but since these approaches minimize the amount of suboptimality of the measured data it is initially unclear without further analysis whether these approaches are inconsistent for problem instances with $\epsilon_0 > 0$.  The following result provides qualitative insights into the behavior of these estimators.

\begin{proposition}
Let $r \in \mathbb{R}_+$ be a constant, and suppose $f = x^2$, $g = [x - 1; -x - 1]$, $\epsilon_0 = 1$, $W = \{w : \|w\| \leq r\}$, and $W_i,X_i$ are uniformly distributed.  Then estimates $\hat{\epsilon}$ generated by the \textsf{VIA} \cite{bertsimas2015} and \textsf{KKT} \cite{keshavarz2011} methods are such that $\asliminf_n \hat{\epsilon} > r/3$.
\end{proposition}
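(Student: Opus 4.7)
My plan is a direct SLLN computation on this specific instance. Because the problem has no free parameter (both $f$ and $g$ are fully specified) and the feasible set $[-1,1]$ is fixed, both estimators collapse to aggregating an explicit per-sample suboptimality residual over the $y_i$. I will identify this residual in closed form, apply Kolmogorov's SLLN to get an almost sure limit, and finish with an elementary arithmetic inequality.

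For the setup: with $f(x)=x^2$ on $[-1,1]$, the minimizer is $x^\star=0$, $V=0$, $\nabla f(y)=2y$, and the $\epsilon_0$-optimal set is $S=[-1,1]$. The data satisfy $y_i=x_i+w_i$ with independent $x_i\sim U(-1,1)$ and $w_i\sim U(-r,r)$, both zero-mean, giving $\mathbb{E}(Y^2)=\mathbb{E}(X^2)+\mathbb{E}(W^2)=(1+r^2)/3$. The arithmetic heart of the argument is that $(1+r^2)/3>r/3$ is equivalent to $r^2-r+1>0$, which holds for every real $r$ because the discriminant $1-4=-3$ is negative.

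For \textsf{VIA}, the per-sample first-order residual is $\sup_{x\in[-1,1]}2y_i(y_i-x)$, maximized at $x=-\mathrm{sgn}(y_i)$ and equal to $2y_i^2+2|y_i|$. Hence $\hat\epsilon_{\text{VIA}}$, whether taken as the sample mean or the maximum of these nonnegative residuals, satisfies $\liminf\ge 2\mathbb{E}(Y^2)+2\mathbb{E}|Y|\ge 2(1+r^2)/3>r/3$ almost surely by Kolmogorov's SLLN. For \textsf{KKT}, the per-sample residual minimizes over dual multipliers $(\mu_1,\mu_2)\ge 0$ a combination of violations of stationarity $2y+\mu_1-\mu_2=0$ and complementary slackness $\mu_1(y-1)=0=\mu_2(-y-1)$. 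Stationarity plus $\mu_j\ge 0$ collapses the choice (for $y>0$) to $\mu_1=0$, $\mu_2=2y$, leaving complementary slackness to contribute $|2y(-y-1)|=2y^2+2y$; symmetrically for $y<0$ the residual is $2y^2+2|y|$. Infeasibility $|y_i|>1$ only enlarges the residual (a feasibility-violation term is added), so the same SLLN bound applies: $\asliminf_n\hat\epsilon_{\text{KKT}}\ge 2(1+r^2)/3>r/3$ almost surely.

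The main obstacle is the \textsf{KKT} per-sample calculation, since the full formulation of \cite{keshavarz2011} jointly optimizes over $(\mu_1,\mu_2,\epsilon,\theta)$ and can combine several violation terms. The one-dimensional structure is what makes this tractable: stationarity together with dual feasibility collapse the multipliers to a sign-determined choice, complementary slackness then produces the closed-form residual above, and the infeasibility case is dispatched by monotonicity. Once the per-sample residual is pinned down, the proof reduces to the discriminant check $r^2-r+1>0$, and the SLLN does the rest.
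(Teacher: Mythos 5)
Your \textsf{VIA} argument is correct and is essentially the paper's: both derive the per-sample residual $2|y_i|+2y_i^2$ from the first-order condition over $[-1,1]$ and pass to the limit by the SLLN. Your moment computation $\mathbb{E}(Y^2)=\mathbb{E}(X^2)+\mathbb{E}(W^2)=(1+r^2)/3$ is in fact cleaner and more robust than the paper's, which identifies the law of $Y_i$ as triangular on $[-(r+1),r+1]$ (exact only when $r=1$; in general $X+W$ is trapezoidal) and integrates against that density.

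The \textsf{KKT} half has a genuine gap. You enforce stationarity $2y+\mu_1-\mu_2=0$ as a hard constraint, deduce $(\mu_1,\mu_2)=(0,2y)$ for $y>0$, and read off the complementary-slackness violation $2y^2+2|y|$. But in the \textsf{KKT} estimator stationarity is itself a penalized residual, jointly minimized with the complementarity terms over $\lambda_i\ge 0$; for this instance the per-sample objective is $|2y+\mu_1-\mu_2|+\mu_1|y-1|+\mu_2|{-y}-1|$, whose partial derivatives in $\mu_1,\mu_2$ are nonnegative everywhere, so the minimizer is $\Lambda_i=0$. The binding residual is then the stationarity term $2|Y_i|$ (plus the primal feasibility violations $(\pm Y_i-1)^+$ when $|Y_i|>1$), not $2Y_i^2+2|Y_i|$, so your claimed lower bound $2(1+r^2)/3$ is not established for \textsf{KKT}. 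The conclusion survives, but by a different computation: with $\Lambda_i=0$ the SLLN gives $\asliminf_n\hat\epsilon\ge 2\mathbb{E}|Y|\ge 2\,\mathbb{E}\bigl|\mathbb{E}(X+W\mid W)\bigr|=2\mathbb{E}|W|=r>r/3$. The paper avoids the multiplier analysis altogether by lower-bounding the max through the primal feasibility term $\frac{1}{n}\sum_i\langle g(Y_i)\rangle_1^+$; either repair works, but as written your derivation of the \textsf{KKT} residual does not.
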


\begin{proof}
The \textsf{KKT} estimate is given by
\begin{multline}
\hat{\epsilon} = \textstyle\max\Big\{\frac{1}{n}\sum_{i=1}^n \langle g(Y_i)\rangle_1^+, \frac{1}{n}\sum_{i=1}^n \langle g(Y_i)\rangle_2^+, \frac{1}{n}\sum_{i=1}^n |2Y_i + \langle\Lambda_i\rangle_1 - \langle\Lambda_i\rangle_2|,\\
\textstyle \frac{1}{n}\sum_{i=1}^n |\langle\Lambda_i\rangle_1\cdot(Y_i-1)|, \frac{1}{n}\sum_{i=1}^n |\langle\Lambda_i\rangle_2\cdot(-Y_i-1)|\Big\}
\end{multline}
where these $\Lambda_i$ are the minimizers of the below optimization problem
\begin{multline}
\textstyle \min\big\{\frac{1}{n}\sum_{i=1}^n |2Y_i + \langle\lambda_i\rangle_1 - \langle\lambda_i\rangle_2| + \langle\lambda_i\rangle_1\cdot|Y_i-1| + \\
\textstyle\langle\lambda_i\rangle_2\cdot|-Y_i-1|\ \big|\ \lambda_i \geq 0, i = 1,\ldots,n\big\}.
\end{multline}
Since $\earg\min_x\{f(x,u,\theta)\ |\ g(x,u,\theta) \leq 0\} = [-1,1]$ under the hypothesis of this proposition, it holds the $Y_i$ are i.i.d. and have triangular distribution with lower limit $-r-1$, upper limit $r+1$, and mode $0$.  Hence the density of $C_i = \langle g(Y_i)\rangle_1^+$ is given by
\begin{equation}
f_C(u) = \textstyle\Big(\frac{1}{2} + \frac{1}{r+1} - \frac{1}{2(r+1)^2}\Big)\cdot\delta(u) + \Big(\frac{r}{(r+1)^2} - \frac{1}{(r+1)^2}\cdot u\Big)\cdot\mathbf{1}(u\in[0,r]),
\end{equation}
where $\delta(u)$ is the Dirac delta function.  So $\mathbb{E}(C_i) = \frac{r+1}{3}$, and the strong law of large numbers implies $\frac{r+1}{3} = \aslim_n \frac{1}{n}\sum_{i=1}^n C_i = \asliminf_n \frac{1}{n}\sum_{i=1}^n C_i \leq \asliminf_n \hat{\epsilon}$.

The \textsf{VIA} estimate is given by $\hat{\epsilon} = \frac{1}{n}\sum_{i=1}^n|\hat{\epsilon}_i|$ where $\epsilon_i$ are the minimizers to
\begin{equation}
\textstyle\min \Big\{\frac{1}{n}\sum_{i=1}^n|\hat{\epsilon}_i|\ \Big|\ 2Y_i(x_i-Y_i) \geq -\hat{\epsilon}_i \text{ for } x_i\in[-1,1], i=1,\ldots,n\Big\}
\end{equation}
However, observe that $2Y_i(x_i-Y_i) \geq -\hat{\epsilon}_i$ for $x_i\in[-1,1]$ simplifies to the constraint $-2(|Y_i|+Y_i^2) \geq -\hat{\epsilon}_i$.  Since the above optimization is minimizing each $|\hat{\epsilon}_i|$, this means the constraint will be $\hat{\epsilon}_i = 2(|Y_i|+Y_i^2)$ at optimality. Recall that as shown in the proof for \textsf{KKT}, the $Y_i$ have a triangular distribution with lower limit $-r-1$, upper limit $r+1$, and mode $0$.  This means $\mathbb{E}(|\hat{\epsilon}_i|) = \frac{2r+2}{3} + \frac{(r+1)^2}{9}$.  Applying the strong law of large numbers gives $\frac{2r+2}{3} + \frac{(r+1)^2}{9} = \aslim_n \frac{1}{n}\sum_{i=1}^n|\hat{\epsilon}_i| = \aslim_n \hat{\epsilon}$.
\qed
\end{proof}

This proposition shows that existing approaches cannot distinguish between noise in measurements versus suboptimality of the solutions.  The reason is that these approaches are minimizing an incorrect error metric: They minimize the amount of suboptimality of the measured data, and this is an incorrect error metric when the measured data is noisy because the noise increases the suboptimality of the measured data.  Moreover, this indistinguishability of existing approaches is unbounded in the sense that as the noise variance increases then their estimates of suboptimality increase in an unbounded way.  Such behavior is undesirable, and in fact the above result gives the following corollary on the statistical properties of \textsf{VIA} and \textsf{KKT}.

\begin{corollary}
The \textsf{VIA} \cite{bertsimas2015} and \textsf{KKT} \cite{keshavarz2011} estimators are statistically inconsistent.
\end{corollary}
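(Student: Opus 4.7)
The plan is essentially immediate from the preceding proposition, which already carries out the computational heavy lifting. Statistical consistency of an estimator of $(\epsilon_0,\theta_0)$ requires, at a minimum, that $\hat\epsilon \to \epsilon_0$ (in probability, and a fortiori almost surely) for every problem instance covered by the setup in Section 5.1. Hence, to prove the corollary it suffices to exhibit a single instance for which this convergence fails, and the proposition supplies exactly such an instance.

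Specifically, I would fix the quadratic template $f = x^2$, $g = [x-1;\, -x-1]$, $\epsilon_0 = 1$, with $W_i,X_i$ uniformly distributed and a noise radius $r > 2$ (chosen so that the bounds from the proposition sharply exceed $\epsilon_0$). For the \textsf{VIA} estimator the proposition yields $\aslim_n \hat\epsilon = (2r+2)/3 + (r+1)^2/9$, a deterministic constant strictly greater than $1 = \epsilon_0$ under this choice of $r$. Almost sure convergence to this wrong value immediately rules out convergence in probability to $\epsilon_0$, so the \textsf{VIA} estimator is inconsistent. For the \textsf{KKT} estimator the proposition shows $\asliminf_n \hat\epsilon \geq (r+1)/3 > 1 = \epsilon_0$, which is incompatible with either mode of convergence to $\epsilon_0$.

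The only minor subtlety is that for \textsf{KKT} the proposition bounds only an almost sure liminf rather than pinning down an almost sure limit. I would dispatch this via the standard subsequence argument: if $\hat\epsilon \to \epsilon_0$ in probability, then some subsequence $\hat\epsilon_{n_k}$ converges almost surely to $\epsilon_0 = 1$, forcing $\asliminf_n \hat\epsilon \leq 1$ almost surely and contradicting the proposition's bound. Hence neither \textsf{VIA} nor \textsf{KKT} is consistent (either almost surely or in probability), completing the corollary. There is no real obstacle here; the work is front-loaded into the proposition, and the corollary is simply an interpretation step noting that failure of $\hat\epsilon$ to converge to $\epsilon_0$ on even one instance precludes statistical consistency.
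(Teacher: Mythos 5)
Your proof is correct and follows essentially the same route as the paper: reduce inconsistency of the estimator class to failure on the single instance supplied by the preceding proposition, and note that the almost-sure (lim inf) bound on $\hat{\epsilon}$ strictly exceeding $\epsilon_0=1$ precludes convergence in either mode. The only cosmetic difference is that you invoke the sharper constants from inside the proposition's proof (giving $r>2$) while the paper uses its stated bound $\asliminf_n\hat{\epsilon}>r/3$ (giving $r>3$); both yield the same conclusion.
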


\begin{proof}
By definition an estimator is consistent for a class of models if and only if it is consistent for each model in that class. Thus to show inconsistency of \textsf{VIA} and \textsf{KKT} it suffices to show inconsistency for a single model.  The above proposition establishes inconsistency of \textsf{VIA} and \textsf{KKT} for a particular model because $\epsilon_0 = 1$ while $\asliminf_n \hat{\epsilon} > r/3$, meaning these approaches are inconsistent when $r > 3$.
\qed
\end{proof}

\subsection{Approximate Bilevel Programming (\textsf{ABP}) Estimator}

To correct the indistinguishability (between suboptimality of solutions and noise in measurements) problem faced by existing approaches, we instead propose an estimator that explicitly models the measured data as consisting of a suboptimal solution added to noise.  More specifically, we propose the following statistical estimator
\begin{equation}
\label{eqn:abp}
\textstyle(\breve{\rule{0ex}{1.5ex}\epsilon},\breve{\theta}) \in\textstyle\arg\min \Big\{\frac{1}{n}\sum_{i=1}^nd^2(Y_i,S(U_i,\epsilon,\theta)\oplus W) + \lambda\cdot\epsilon\ \Big|\ \epsilon \in E, \theta\in\Theta\Big\}
\end{equation}
where $\lambda\in\mathbb{R}_+$ and $d^2$ is the squared distance function defined in the preliminaries.  It is also useful to consider estimators defined as approximate solutions to the above optimization problem.  Let $z\in\mathbb{R}_+$ be a nonnegative value, and define the estimates
\begin{multline}
\textstyle(\hat{\rule{0ex}{1.5ex}\epsilon},\hat{\theta}) \in \Big\{\epsilon \in E, \theta\in\Theta : \frac{1}{n}\sum_{i=1}^nd^2(Y_i,S(U_i,\epsilon,\theta)\oplus W) + \lambda\cdot\epsilon \leq \\\textstyle\frac{1}{n}\sum_{i=1}^nd^2(Y_i,S(U_i,\breve{\rule{0ex}{1.5ex}\epsilon},\breve{\theta})\oplus W) + \lambda\cdot\breve{\rule{0ex}{1.5ex}\epsilon} + z\Big\}.
\end{multline}
For notational convenience, we will call this estimator the \textsf{ABP} estimator.  Note these estimates are defined as being any $z\zarg\min$ of the optimization problem (\ref{eqn:abp}).

\begin{theorem}
\label{thm:abp_cons}
The \textsf{ABP} estimator is strongly statistically consistent, meaning we have $\aslim_n (\hat{\rule{0ex}{1.5ex}\epsilon},\hat{\theta}) = (\epsilon_0,\theta_0)$ whenever $\lambda = 1/n$ and $\lim_n (n\cdot z) = 0$.
\end{theorem}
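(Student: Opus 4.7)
The plan is to prove consistency via the standard two-step M-estimator argument, using a uniform strong law of large numbers (which implies epi-convergence) combined with the vanishing penalty $\lambda\epsilon$ to select the smallest admissible $\epsilon$. Define the empirical and population criteria
\begin{equation}
F_n(\epsilon,\theta) = \tfrac{1}{n}\textstyle\sum_{i=1}^n d^2(Y_i, S(U_i,\epsilon,\theta)\oplus W), \qquad F(\epsilon,\theta) = \mathbb{E}\,d^2(Y, S(U,\epsilon,\theta)\oplus W),
\end{equation}
so that the \textsf{ABP} objective equals $F_n + \lambda\epsilon$ with $\lambda = 1/n$.

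First I would establish the regularity needed for the argument: the parametric $\epsilon$-solution set $S(u,\epsilon,\theta)$ is continuous jointly in $(u,\epsilon,\theta)$. Continuity in $(u,\theta)$ follows from joint continuity of $f,g$ and the Slater-type constraint qualification that is assumed, while continuity in $\epsilon>0$ follows from standard parametric-programming results in \cite{rockafellar2009,royset2016b}. Combining with continuity of the Minkowski sum against the fixed compact $W$ (Corollary \ref{cor:minklim}\ref{cor:minklim_osum}), the map $(u,\epsilon,\theta)\mapsto S(u,\epsilon,\theta)\oplus W$ is continuous, which in turn makes the integrand $d^2(y,S(u,\epsilon,\theta)\oplus W)$ jointly continuous and uniformly bounded on every compact parameter set (using that $Y,U,W$ have bounded supports).

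Next I would show that the estimator satisfies $\aslimsup_n \hat\epsilon \leq \epsilon_0$, which lets me restrict attention to a compact subset of the parameter space. Because $Y_i = X_i + W_i \in S(U_i,\epsilon_0,\theta_0)\oplus W$ almost surely by construction, $F_n(\epsilon_0,\theta_0)=0$ a.s.; the $z$-approximate minimizer property then yields
\begin{equation}
F_n(\hat\epsilon,\hat\theta) + \tfrac{1}{n}\hat\epsilon \;\leq\; F_n(\epsilon_0,\theta_0) + \tfrac{1}{n}\epsilon_0 + z \;=\; \tfrac{1}{n}\epsilon_0 + z.
\end{equation}
Multiplying by $n$ and dropping the nonnegative term $nF_n(\hat\epsilon,\hat\theta)$ gives $\hat\epsilon \leq \epsilon_0 + nz$, and since $\lim_n nz = 0$ we conclude $\aslimsup_n \hat\epsilon \leq \epsilon_0$. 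Consequently $(\hat\epsilon,\hat\theta)$ almost surely lies eventually in a compact $E_0\times\Theta \subseteq E\times\Theta$.

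On this compact set I would apply a uniform strong law of large numbers, available from the continuity and boundedness of the integrand in Step 1, to get $\sup_{(\epsilon,\theta)\in E_0\times\Theta}|F_n(\epsilon,\theta) - F(\epsilon,\theta)|\to 0$ a.s. Since the displayed inequality also gives $F_n(\hat\epsilon,\hat\theta)\leq \epsilon_0/n + z \to 0$, it follows that $F(\hat\epsilon,\hat\theta)\to 0$ a.s. Any a.s.\ cluster point $(\epsilon^\star,\theta^\star)$ of $(\hat\epsilon,\hat\theta)$, which exists by compactness, satisfies $F(\epsilon^\star,\theta^\star)=0$ by continuity of $F$; the identifiability assumption then forces $\theta^\star=\theta_0$ and $\epsilon^\star\geq\epsilon_0$. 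Combined with $\epsilon^\star\leq\epsilon_0$, this pins down $(\epsilon^\star,\theta^\star)=(\epsilon_0,\theta_0)$; since every cluster point is the same, $\aslim_n(\hat\epsilon,\hat\theta)=(\epsilon_0,\theta_0)$. The main obstacle will be Step 1, i.e.\ verifying joint continuity of $S(u,\epsilon,\theta)$ cleanly, since inner semicontinuity of the $\epsilon$-argmin can fail without the Slater condition and requires care along the boundary of the $\epsilon$-level set; a secondary technicality is promoting the pointwise SLLN to a uniform one, which I would handle by an equicontinuity and finite-cover argument on the compact set $E_0\times\Theta$.
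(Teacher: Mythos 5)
Your overall strategy is sound and genuinely different in packaging from the paper's. The paper rescales the objective to $\sum_{i=1}^n d^2(Y_i,S(U_i,\epsilon,\theta)\oplus W)+\epsilon$, observes it is nondecreasing in $n$ so that its epi-limit is the pointwise supremum, uses Wald's covering argument to show that supremum is strictly positive off $(\epsilon_0,\theta_0)$ on $[0,\epsilon_0]\times\Theta$, and then invokes Theorem 7.33 of Rockafellar--Wets to pass from epi-convergence to convergence of $z$-approximate minimizers. You instead run a direct M-estimation argument: the observation that $F_n(\epsilon_0,\theta_0)=0$ a.s.\ (since $Y_i\in S(U_i,\epsilon_0,\theta_0)\oplus W$ by construction) combined with the $z$-approximate minimizer inequality gives $\hat\epsilon\le\epsilon_0+nz$, hence $\aslimsup_n\hat\epsilon\le\epsilon_0$ and eventual confinement to a compact set, after which a uniform law plus identifiability kills all other cluster points. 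Your localization step is cleaner and more explicit than the paper's (which only implicitly uses $F_n(\epsilon_0,\theta_0)=0$ to bound the optimal value by $\epsilon_0$), and your route avoids epi-convergence machinery entirely; the paper's route buys a statement that applies uniformly to all $z$-argmin selections via a single citation.

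The one step that would fail as written is your Step 1. The map $(\epsilon,\theta)\mapsto S(u,\epsilon,\theta)$ is in general only outer semicontinuous, not continuous: inner semicontinuity fails at $\epsilon=0$ unless $f$ is strictly convex (the paper's own corollary following Theorem \ref{thm:abp_cons} is careful about exactly this), and your compact parameter set $[0,\epsilon_0+\delta]\times\Theta$ contains such points. Consequently $d^2(y,S(u,\epsilon,\theta)\oplus W)$ is only lower semicontinuous in $(\epsilon,\theta)$, the family is not equicontinuous, and a uniform SLLN of the form $\sup|F_n-F|\to 0$ is not available. Fortunately you only ever need one-sided bounds in the direction that lower semicontinuity supplies: to rule out a cluster point $(\epsilon^\star,\theta^\star)\ne(\epsilon_0,\theta_0)$ you need $\liminf_n\inf_{T}F_n>0$ on a small neighborhood $T$ of each such point, and this follows from $\mathbb{E}\bigl(\inf_{(\epsilon',\theta')\in T}d^2(Y,S(U,\epsilon',\theta')\oplus W)\bigr)>F(\epsilon,\theta)-\nu$ (lower semicontinuity plus monotone convergence), the ordinary SLLN applied to the infimized integrand, and a finite subcover --- i.e., exactly the Wald device the paper uses. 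So replace ``joint continuity $+$ equicontinuity-based uniform SLLN'' by ``lower semicontinuity $+$ Wald covering'' and your argument closes; as stated, that step is a genuine gap.
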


\begin{proof}
Our first step is to show $d^2(y,S(u,\epsilon,\theta)\oplus W)$ satisfies certain continuity properties.  Note $\{x : g(x,u,\theta) \leq 0\}$ is continuous by Example 5.10 of \cite{rockafellar2009}, and so $V(u,\theta)$ is continuous by the Berge maximum theorem \cite{berge1963}.  Noting $d^2(y,S(u,\epsilon,\theta)\oplus W) = \min\{\|y-\hat{y}\|^2\ |\ \hat{y} = \hat{x} + \hat{\epsilon}, \hat{\epsilon} \in W, f(\hat{x},u,\theta) \leq V(u,\theta) + \epsilon, g(\hat{x},u,\theta) \leq 0\}$, we can apply the Berge maximum theorem \cite{berge1963} since this feasible set is osc by Example 5.8 of \cite{rockafellar2009}: This implies $d^2(y,S(u,\epsilon,\theta)\oplus W)$ is lower semicontinuous in $(\epsilon,\theta)$, and so $\mathbb{E}(d^2(Y_i,S(U_i,\epsilon,\theta)\oplus W))$ is lower semicontinuous in $(\epsilon,\theta)$ by Fatou's lemma.

Next note that the estimate $(\breve{\rule{0ex}{1.5ex}\epsilon},\breve{\theta})$ also minimizes the optimization problem
\begin{equation}
\label{eqn:abp_reform}
\textstyle\min \Big\{\sum_{i=1}^nd^2(Y_i,S(U_i,\epsilon,\theta)\oplus W) + n\lambda\cdot\epsilon\ \Big|\ \epsilon \in E, \theta\in\Theta\Big\}
\end{equation}
But $n\lambda = 1$ by assumption, and so the objective of (\ref{eqn:abp_reform}) is nondecreasing in $n$.  Hence $\mathbb{P}(\elim_n \sum_{i=1}^nd^2(Y_i,S(U_i,\epsilon,\theta)\oplus W) + \epsilon = \sup_n \sum_{i=1}^nd^2(Y_i,S(U_i,\epsilon,\theta)\oplus W) + \epsilon) = 1$ by Proposition 7.4 of \cite{rockafellar2009}, where $\elim$ is the epi-limit \cite{rockafellar2009}.  We next prove that
\begin{equation}
\label{eqn:suplimn}
\textstyle\mathbb{P}(\sup_n \sum_{i=1}^nd^2(Y_i,S(U_i,\epsilon,\theta) \oplus W) > 0 \text{ for } (\epsilon,\theta)\in([0,\epsilon_0]\times\Theta)\setminus\{(\epsilon_0,\theta_0)\})) = 1,
\end{equation}
and our approach is to use a well-known covering argument originally due to Wald \cite{wald1949note}.  Let $S_k$ be a decreasing sequence (i.e., $S_k\supseteq S_{k+1} \supseteq \cdots$) of open neighborhoods of $(\epsilon_0,\theta_0)$, with $\lim_k S_k = \{(\epsilon_0,\theta_0)\}$.  Since $\mathbb{E}(d^2(Y_i,S(U_i,\epsilon,\theta)\oplus W))$ is lower semicontinuous in $(\epsilon,\theta)$, this means $\min \{\mathbb{E}(d^2(Y_i,S(U_i,\epsilon,\theta)\oplus W))\ |\ (\epsilon,\theta) \in ([0,\epsilon_0]\times\Theta)\setminus S_k\} > 0$ by the identifiability condition.  Thus there exists $\nu_k > 0$ such that $\mathbb{E}(d^2(Y_i,S(U_i,\epsilon,\theta)\oplus W)) > 2\nu_k$ for $(\epsilon,\theta) \in ([0,\epsilon_0]\times\Theta)\setminus S_k$.  By lower semicontinuity of $d^2(y,S(u,\epsilon,\theta)\oplus W)$ and the monontone convergence theorem, there exists an open neighborhood $T_k(\epsilon,\theta)$ for each $(\epsilon,\theta)\in([0,\epsilon_0]\times\Theta)\setminus S_k$ so that we have $\mathbb{E}(\inf \{d^2(Y_i,S(U_i,\epsilon',\theta')\oplus W)\ |\ (\epsilon',\theta')\in T_k(\epsilon,\theta)) > \mathbb{E}(d^2(Y_i,S(U_i,\epsilon,\theta)\oplus W)) - \nu_k$.  Since $([0,\epsilon_0]\times\Theta)\setminus S_k$ is compact, there exists a finite set $F_k\in([0,\epsilon_0]\times\Theta)\setminus S_k$ such that $T_k(\epsilon,\theta)$ for $(\epsilon,\theta)\in F_k$ forms a finite subcover of $([0,\epsilon_0]\times\Theta)\setminus S_k$.  Combining the above with the Borel-Cantelli lemma implies $\mathbb{P}(\inf\{\sup_n \sum_{i=1}^nd^2(Y_i,S(U_i,\epsilon',\theta') \oplus W)\ |\ (\epsilon',\theta') \in T_k(\epsilon,\theta)\} > 0) = 1$ for each $(\epsilon,\theta)\in F_k$, which by the finiteness of $F_k$ implies that $\mathbb{P}(\sup_n \sum_{i=1}^nd^2(Y_i,S(U_i,\epsilon,\theta) \oplus W) > 0 \text{ for } (\epsilon,\theta)\in([0,\epsilon_0]\times\Theta)\setminus S_k)) = 1$.  The desired (\ref{eqn:suplimn}) follows since we choose the sequence $S_k$ such that $S_k\downarrow \{(\epsilon_0,\theta_0)\}$.



Next consider the optimization problem
\begin{equation}
\label{eqn:limopt}
\textstyle\min \Big\{\sup_n\sum_{i=1}^nd^2(Y_i,S(U_i,\epsilon,\theta)\oplus W) + \epsilon\ \Big|\ \epsilon \in E, \theta\in\Theta\Big\}
\end{equation}
Note $(\epsilon_0,\theta_0)$ is feasible for both (\ref{eqn:abp_reform}) and (\ref{eqn:limopt}), and so the minimums of (\ref{eqn:abp_reform}) and (\ref{eqn:limopt}) are both less than or equal to $\epsilon_0$.  This means $\epsilon > \epsilon_0$ cannot minimize (\ref{eqn:limopt}).  Furthermore, using (\ref{eqn:suplimn}) implies that almost surely the (unique) minimizer of (\ref{eqn:limopt}) is $(\epsilon_0,\theta_0)$, and almost surely the minimum value of (\ref{eqn:limopt}) is $\epsilon_0$.  But from the argument in the preceding paragraph, (\ref{eqn:abp_reform}) epi-converges almost surely to (\ref{eqn:limopt}) since $E,\Theta$ are fixed.  The result now follows from Theorem 7.33 of \cite{rockafellar2009}.
\qed
\end{proof}

The above result concerns almost sure convergence of the \textsf{ABP} estimates $(\hat{\rule{0ex}{1.5ex}\epsilon},\hat{\theta})$ to the actual parameters $(\epsilon_0,\theta_0)$, but a related question is whether the corresponding solution set estimates $S(u, \hat{\rule{0ex}{1.5ex}\epsilon},\hat{\theta})$ converge to the actual solution sets $S(u, \epsilon_0,\theta_0)$.  Our semicontinuous mapping theorem can be used to establish almost sure convergence of the solution set estimates, and this argument leads to the the following corollary.

\begin{corollary}
We have that $\aslimsup_n S(u,\hat{\rule{0ex}{1.5ex}\epsilon},\hat{\theta}) \subseteq S(u,\epsilon_0,\theta_0)$ for $u\in D$.  If $\epsilon_0 > 0$ or $f(\cdot,u,\theta)$ is strictly convex in $x$, then $\aslim_n S(u,\hat{\rule{0ex}{1.5ex}\epsilon},\hat{\theta}) = S(u,\epsilon_0,\theta_0)$ for $u\in D$.
\end{corollary}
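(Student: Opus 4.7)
The plan is to combine the strong consistency $\aslim_n (\hat{\epsilon},\hat{\theta}) = (\epsilon_0,\theta_0)$ from Theorem \ref{thm:abp_cons} with the semicontinuous mapping theorem applied to the parametric solution mapping $(\epsilon,\theta) \mapsto S(u,\epsilon,\theta)$ for fixed $u \in D$. Thus everything reduces to verifying the appropriate semicontinuity of this mapping at $(\epsilon_0,\theta_0)$ and then reading off the conclusions from Theorem \ref{thm:smt}.

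For the first conclusion I would verify outer semicontinuity. Because $f, g$ are continuous by assumption and $V(u,\cdot)$ is continuous by the Berge maximum theorem (as already established inside the proof of Theorem \ref{thm:abp_cons}), the graph
\[
\{(\epsilon,\theta,x) : f(x,u,\theta) - V(u,\theta) - \epsilon \leq 0,\ g(x,u,\theta) \leq 0\}
\]
is closed, which is equivalent to osc of $S(u,\cdot,\cdot)$ at every point in its domain. Part (a) of Theorem \ref{thm:smt} then delivers $\aslimsup_n S(u,\hat{\epsilon},\hat{\theta}) \subseteq S(u,\epsilon_0,\theta_0)$. For the second conclusion I would establish inner semicontinuity under each sufficient hypothesis and then invoke part (c) of Theorem \ref{thm:smt}. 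When $\epsilon_0 > 0$, the approach is to exploit a Slater-type strict feasibility: the constraint qualification from the problem setup supplies $x^s$ with $g(x^s,u,\theta_0) < 0$, while any $x^\ast \in \arg\min f$ satisfies $f(x^\ast,u,\theta_0) = V(u,\theta_0) < V(u,\theta_0) + \epsilon_0$, so a suitable convex combination $\tilde{x} = (1-\alpha)x^\ast + \alpha x^s$ makes both $f(\tilde{x}) < V + \epsilon_0$ and $g(\tilde{x}) < 0$ strictly. For any $x \in S(u,\epsilon_0,\theta_0)$ and sequence $(\epsilon_n,\theta_n) \to (\epsilon_0,\theta_0)$, the interpolates $x_n = (1-\delta_n)x + \delta_n \tilde{x}$ converge to $x$ and, by convexity of $f,g$ in $x$ together with continuity of $f,g,V$ in $(x,\theta)$, lie in $S(u,\epsilon_n,\theta_n)$ for large $n$ provided $\delta_n \downarrow 0$ slowly enough. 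When instead $f(\cdot,u,\theta)$ is strictly convex and $\epsilon_0 = 0$, the target $S(u,0,\theta_0)$ collapses to the singleton containing the unique minimizer, and isc then follows from the already-established osc plus compactness of the feasible region: every selection from $S(u,\hat{\epsilon},\hat{\theta})$ has this single point as its only cluster point.

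The main obstacle is the rate calibration in the interpolation argument when $\epsilon_0 > 0$. The strict-feasibility margin at $x_n$ degrades like $\delta_n$, whereas the perturbation errors $f(x_n,u,\theta_n) - f(x_n,u,\theta_0)$, $V(u,\theta_n) - V(u,\theta_0)$, and $\epsilon_n - \epsilon_0$ are controlled only by continuity moduli that vanish at unknown rates. A diagonal argument that lets $\delta_n \to 0$ slower than these moduli should close the gap, and this is precisely where the hypothesis $\epsilon_0 > 0$ (equivalently, the singleton structure under strict convexity when $\epsilon_0 = 0$) is essential; without one of these, the remark following Theorem \ref{thm:smt} shows isc can genuinely fail.
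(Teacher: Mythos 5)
Your proposal is correct and follows essentially the same route as the paper: reduce everything to semicontinuity of $(\epsilon,\theta)\mapsto S(u,\epsilon,\theta)$ at $(\epsilon_0,\theta_0)$ and invoke the semicontinuous mapping theorem together with the parameter consistency from Theorem \ref{thm:abp_cons}, with osc from closedness of the constraint system and continuity either from strict feasibility when $\epsilon_0>0$ or from single-valuedness under strict convexity. The only difference is cosmetic: where the paper cites Example 5.10 of Rockafellar--Wets for continuity in the $\epsilon_0>0$ case, you unfold that citation into an explicit Slater-point interpolation argument.
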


\begin{proof}
The above proof established that $S(u,\epsilon,\theta)$ is osc in $\epsilon,\theta$.  And so the first part of the corollary follows by the semicontinuous mapping theorem.  If $\epsilon_0 > 0$ then $S(u,\epsilon_0,\theta_0)$ is continuous at $(\epsilon_0,\theta_0)$ by Example 5.10 of \cite{rockafellar2009}.  If $f(\cdot,u,\theta)$ is strictly convex in $x$, then $S(u,0,\theta_0)$ is single-valued \cite{rockafellar2009}.  Hence $S(u,0,\theta_0)$ is continuous because a single-valued, osc, and locally bounded function is continuous \cite{rockafellar2009}. Thus the second part of the corollary follows from the semicontinuous mapping theorem.\qed
\end{proof}

\subsection{Algorithms to Compute \textsf{ABP} Estimator}

We next discuss numerical computation of \textsf{ABP} using the data $(u_i, y_i)$ for $i = 1,\ldots,n$.  The \textsf{ABP} estimator is an approximate (i.e., the solution sets have $\epsilon$ possibly greater than zero) bilevel program, which are optimization problems where some decision variables are solutions to optimization problems that are called the lower level problem. One approach to solve bilevel programs replaces the lower level problem with its KKT conditions \cite{allende2013,dempe2012}, and this can sometimes be rewritten as mixed-integer programs that may be numerically solved quickly \cite{aswani2016_wl}.  Another approach upper bounds the objective function of the lower level problem by its value function \cite{outrata1990,ye1995}.

Here we describe how a third approach that upper bounds the objective function of the lower level problem by its dual function \cite{aswani2015,aswani2016} can be used to compute the \textsf{ABP} estimator.  If $h(u,\theta,\lambda)$ is the Lagrangian dual function corresponding to (\ref{eqn:fop}), then under mild conditions ensuring zero duality gap the \textsf{ABP} estimator is given by
\begin{equation}
\label{eqn:dual_reform}
\begin{aligned}
\textstyle(\breve{\rule{0ex}{1.5ex}\epsilon},\breve{\theta}) \in\textstyle\arg\min\ &\textstyle\frac{1}{n}\sum_{i=1}^n\|y_i - \hat{x}_i\|^2 + \lambda\cdot\epsilon\\
\text{s.t. } & f(\hat{x}_i, u_i, \theta) \leq h(u_i,\theta,\lambda_i) + \epsilon\\
&g(\hat{x}_i,u_i,\theta) \leq 0\\
&\lambda_i\geq0,\epsilon \in E, \theta\in\Theta
\end{aligned}
\end{equation}
This duality-based reformulation can be numerically solved by two different algorithms \cite{aswani2015,aswani2016}, which we briefly describe here.  More details can be found in the corresponding references, and both algorithms assume the sets $E,\Theta$ are compact.  

Since the reformulation (\ref{eqn:dual_reform}) is a convex optimization problem for fixed $(\epsilon,\theta)$, one algorithm \cite{aswani2015} for computing \textsf{ABP} is to: discretize the set $E\times\Theta$ into a finite set $\Delta = \{(\epsilon_1,\theta_1),\ldots,(\epsilon_k,\theta_k)\}$ such that it forms a set covering with balls of a prescribed radius, compute the minimum objective function value of (\ref{eqn:dual_reform}) for $(\epsilon,\theta)\in\Delta$ (which we call $Q(\epsilon,\theta)$), and then choose estimates $\textstyle(\hat{\rule{0ex}{1.5ex}\epsilon},\hat{\theta}) = \arg\min\{Q(\epsilon,\theta)\ |\ (\epsilon,\theta)\in\Delta\}$.  A result from \cite{aswani2015} implies that estimates chosen using this \emph{enumeration algorithm} satisfy the assumptions of Theorem \ref{thm:abp_cons}, which is sufficient for statistical consistency.

A second algorithm \cite{aswani2016} replaces the Lagrangian dual by a numerically computed dual.  Partial dualization is used to define a regularized dual function (\textsf{RDF})
\begin{equation}
h_\mu(u,\theta,\lambda) = \textstyle\min_x\big\{\mu\cdot\|x\|^2+f(x,u,\theta) + \lambda^\mathsf{T}g(x,u,\theta)\ |\ x \in X\}.
\end{equation}
Here, $X$ is any compact set defined such that $\{x : \exists (u,\theta)\in U\times\Theta \text{ s.t. } g(x,u,\theta) \leq 0\} \subseteq \mathrm{int}(X)$.  The intuition is that $X$ is a set that contains all the feasible sets of (\ref{eqn:fop}) within its interior.  When $g$ does not depend on $(u,\theta)$, we can choose $X = \{x : l_i-1 \leq x_i \leq u_i+1\}$ with $u_i = \max \{x_i\ |\ g(x) \leq 0\}$ and $l_i = \min\{x_i\ |\ g(x) \leq 0\}$ that are computed by solving convex optimization problems.  Many applications of inverse approximate optimization consist of such a setting where the feasible set is independent of the inputs $u$ or the parameters $\theta$.  The benefit of the \textsf{RDF} is it can be numerically computed because it is a convex optimization problem, and that its gradient
\begin{equation}
\begin{aligned}
\nabla_{\theta}h_\mu(u,\theta,\lambda) &= \nabla_{\theta}f(x,u,\theta) + \lambda^\mathsf{T}\cdot\nabla_{\theta}g(x,u,\theta)\\
\nabla_{\lambda}h_\mu(u,\theta,\lambda) &= g(x,u,\theta)
\end{aligned}
\end{equation}
always exists when $\mu > 0$.  In contrast, the Lagrangian dual is usually only directionally differentiable but not differentiable.  The algorithm proceeds by using a nonlinear numerical solver to solve a sequence of optimization problems in which $\mu$ goes to $0$.

A third possibility is a polynomial time approximation algorithm with the property that statistical consistency holds as the amount of samples $n$ increases to infinity.  Such an algorithm has been constructed, when $f$ is affine in $\theta$ and $g$ does not depend on $\theta$, for inverse optimization with noisy data \cite{aswani2015}; it uses kernel regression to pre-smooth the data and then solves a convex problem corresponding to inverse optimization assuming no noise in the pre-smoothed data.  Here we sketch a similar algorithm for inverse approximate optimization, and we leave its analysis for future work.  Define $\hat{S}(u) = \mathrm{co}(\{y_i : \|u_i - u\| \leq h\})\ominus W$ for $h\in\mathbb{R}_+$, and choose the data $\hat{x}_i$ by sampling from the uniform distribution on $\hat{S}(u_i)$.  The estimate $(\breve{\rule{0ex}{1.5ex}\epsilon},\breve{\theta})$ is computed by solving (\ref{eqn:dual_reform}) with the change that the $g(\hat{x}_i,u_i,\theta) \leq 0$ constraints are removed.

\subsection{Numerical Example}

\begin{figure}
\centering
\includegraphics[trim=42pt 27pt 35pt 21pt,clip=true]{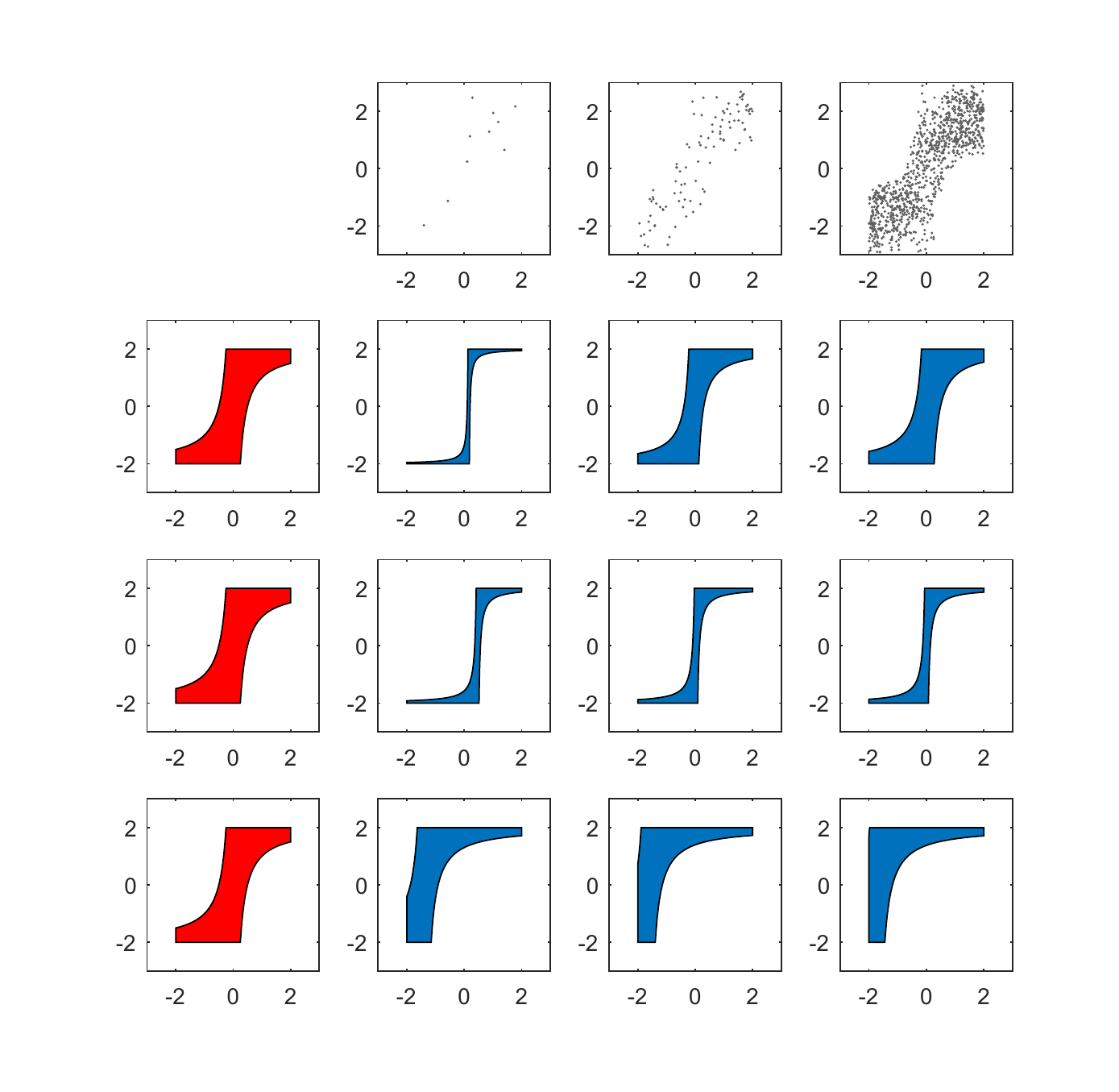}
\caption{The $x$-axes on the plots are $u$. For the solution set $S(u,\epsilon,\theta) = \earg\min_x\{-(\theta+u)\cdot x\ |\ -2\leq x\leq 2\}$, the left column shows the actual $S(u,\epsilon_0,\theta_0)$ when $\epsilon_0 = 1$ and $\theta_0 = 0$.  The top row from left to right shows the noisy measurements $(u_i,y_i)$ for $n = 10^1$, $n = 10^2$, and $n = 10^3$ data points, respectively, and the rows below show the estimated solution set as computed by \textsf{ABP}, \textsf{KKT}, and \textsf{VIA}, respectively.\label{fig:solset}}
\end{figure}

We next consider a numerical example to visually compare estimates of $S(u,\epsilon_0,\theta_0)$ produced by our \textsf{ABP} estimator and the \textsf{VIA} \cite{bertsimas2015} and \textsf{KKT} \cite{keshavarz2011} estimators.  Suppose $x\in\mathbb{R}$, $f = -(\theta+u)\cdot x$, $g = [x-2;-x-2]$, $\epsilon_0 = 1$, $\theta_0 = 0$, $W = \{w : \|w\|\leq 1\}$, $U_i$ has a uniform distribution $U(-2,2)$, $X_i$ is uniformly distributed on $S(U_i,\epsilon_0,\theta_0)$, $E = \{\epsilon : 0.1 \leq \epsilon \leq 10\}$, and $\Theta = \{\theta : -2 \leq \theta\leq 2\}$.  The solution set $S(u,\epsilon_0,\theta_0)$ in this setting is shown in the left column of Fig. \ref{fig:solset}. Each measurement $(u_i, y_i)$ for this example is a point, and the top row of Fig. \ref{fig:solset} shows the measurements for $n = 10^1$, $n = 10^2$, and $n = 10^3$ data points, respectively. The rows below show the estimated (using the measurements shown above) solution set as computed by \textsf{ABP}, \textsf{KKT}, and \textsf{VIA}, respectively\footnote{Our code \url{http://ieor.berkeley.edu/~aaswani/code/ssvf.zip} runs in about three hours.}.  This example shows that as the number of measurements increases, the solution set estimated by \textsf{ABP} (\textsf{KKT} and \textsf{VIA}) converges (does not converge) to the actual solution set.  This statistical behavior is expected given our theoretical results on the strong consistency of \textsf{ABP} and the statistical inconsistency of \textsf{KKT} and \textsf{VIA}.

\subsection{Related Inverse Optimization Problems}

In our problem setup, the measurement noise $W_i$ had a distribution with a finite support.  However, noise models commonly used in statistics include distributions with unbounded support but finite variance.  The canonical example is $W_i$ that are jointly Gaussian with zero mean and finite covariance.  A heuristic approach for distributions with unbounded support is to use our \textsf{ABP} estimator with the choices of $W = (2\log n)^{1/2}\cdot \Sigma$ for sub-Gaussian distributions (i.e., distributions bounded from above by a jointly Gaussian random variable) and $W = ((2\log n)^{1/2} + \log n)\cdot\Sigma$ for sub-exponential distributions (i.e., distributions with exponentially decaying tails), where $\Sigma = \mathbb{E}(W_i^{\vphantom{\mathsf{T}}}W_i^\mathsf{T})$ is the covariance matrix of $W_i$.  The reason for this suggested heuristic is these choices of $W$ are analogous to bounds on the maximum expected values of sub-Gaussian and sub-exponential random variables \cite{boucheron2013}.

Since the \textsf{ABP} estimator is a heuristic in this setting, an obvious topic is to design a statistically consistent estimator for inverse approximate optimization problems with unbounded noise.  Maximum likelihood estimation is arguably the most natural approach because otherwise it is difficult to distinguish between noise and suboptimality of solutions.  Specifically, consider the original problem setup but with the changes that the random sample is $(u_i,X_i)$, the $X_i$ are uniformly distributed within $S(u_i,\epsilon_0,\theta_0)$, and that $W_i$ is distributed according to some known density $f_W(u)$.  Then the maximum likelihood estimator (\textsf{MLE}) for this modified problem setup is given by 
\begin{multline}
\label{eqn:iopt_mle}
\textstyle(\epsilon_\mathrm{mle},\theta_\mathrm{mle}) \in \arg\min \Big\{-\frac{1}{n}\sum_{i=1}^n \log\int_{x \in S(u_i, \epsilon, \theta)}f_W(Y_i-x)dx + \\
\textstyle\frac{1}{n}\sum_{i=1}^n\log\int_{x \in S(u_i, \epsilon, \theta)}dx\ \Big|\ \epsilon \in E, \theta\in\Theta\Big\}.
\end{multline}
This optimization problem has a challenging structure in which the domains of integration depend upon the decision variables \cite{royset2017variational}, and presents an opportunity for the further study of designing numerical algorithms to solve such optimization problems.  We do note that for fixed $(\epsilon,\theta)$, the integrals in the objective can be numerically computed in polynomial time using hit-and-run techniques for sampling from convex sets \cite{lovasz2006,smith1984}.  And so the enumeration algorithm we described earlier for the \textsf{ABP} estimator could be easily modified to solve this \textsf{MLE} problem.

\begin{remark}
The \textsf{ABP} and \textsf{MLE} estimators are actually qualitatively the same.  The $\frac{1}{n}\sum_{i=1}^nd^2(Y_i,S(U_i,\epsilon,\theta)\oplus W)$ term in \textsf{ABP} and the $-\frac{1}{n}\sum_{i=1}^n \log\int_{x \in S(u_i, \epsilon, \theta)}f_W(Y_i-x)dx$ term in \textsf{MLE} both penalize estimates in which the solutions $Y_i$ are far from the solution sets $S(\cdot, \epsilon, \theta)$, and the $\frac{1}{n}\sum_{i=1}^n\log\int_{x \in S(u_i, \epsilon, \theta)}dx$ term in \textsf{MLE} and the $\lambda\cdot\epsilon$ term in \textsf{ABP} both penalize  estimates that generate large solution sets.
\end{remark}

In the two inverse approximate optimization problem setups considered above, we assumed the approximate solutions $X_i$ were drawn from the solution sets $S(U_i,\epsilon_0,\theta_0)$ according to some distribution.  However, another modified problem setup would be to assume the $X_i$ were chosen from the solution sets by solution of another optimization problem.  This kind of setup corresponds to a scenario in which the $X_i$ are solutions to an optimistic bilevel optimization problem with unique solutions:
\begin{equation}
x = \arg\min \big\{F(u,x,z)\ \big|\ x \in S(u,\epsilon_0,\theta_0), G(u,x,z) \leq 0\big\}.
\end{equation}
In this case, the estimation procedure can be posed as a least squares problem
\begin{equation}
\label{eqn:iopt_ble}
\begin{aligned}
(\epsilon_\mathrm{ble},\theta_\mathrm{ble}) \in \arg\min\ & \textstyle\frac{1}{n}\sum_{i=1}^n \|Y_i-x_i\|^2\\
\text{s.t. }& x_i = \arg\min \big\{F(U_i,x,z)\ \big|\ x \in S(U_i,\epsilon,\theta), G(U_i,x,z) \leq 0\big\}\\
&\epsilon\in E,\theta\in\Theta.
\end{aligned}
\end{equation}
This is a challenging multi-level optimization problem and presents an opportunity for the further study of designing numerical algorithms to solve such optimization problems.  We do note that for fixed $(\epsilon,\theta)$, this becomes a convex optimization problem.  And so the enumeration algorithm we described earlier for the \textsf{ABP} estimator could be easily modified to solve this least squares problem.

\section{Conclusion}

In this paper, we used variational analysis to develop tools for statistics with set-valued functions, and then applied these tools to two estimation problems.  We constructed and studied a kernel regression estimator for set-valued functions and an estimator for the inverse approximate optimization problem.  The area of statistics with set-valued functions remains largely unexplored with many remaining problems.  One question is the design of numerical representations of sets and set-valued functions.  Though constraint representations of sets are pervasive, numerical machinery like epi-splines \cite{royset2016} may offer greater representational flexibility.  Another question is the development of numerical algorithms to solve optimization problems that arise in statistical estimation for set-valued functions.  Related inverse optimization problems lead to formulations (\ref{eqn:iopt_mle}) and (\ref{eqn:iopt_ble}) with structures that are not well-studied from the perspective of numerical optimization.  Further study of statistics with set-valued functions will require developing new numerical methods and optimization theory.



\bibliographystyle{spmpsci}      
\bibliography{set_stat}   

\begin{thebibliography}{10}
\providecommand{\url}[1]{{#1}}
\providecommand{\urlprefix}{URL }
\expandafter\ifx\csname urlstyle\endcsname\relax
  \providecommand{\doi}[1]{DOI~\discretionary{}{}{}#1}\else
  \providecommand{\doi}{DOI~\discretionary{}{}{}\begingroup
  \urlstyle{rm}\Url}\fi

\bibitem{ahuja2001}
Ahuja, R., Orlin, J.: Inverse optimization.
\newblock Operations Research \textbf{49}(5), 771--783 (2001)

\bibitem{allende2013}
Allende, G., Still, G.: Solving bilevel programs with the {KKT}-approach.
\newblock Mathematical programming \textbf{138}(1-2), 309 (2013)

\bibitem{artstein1975}
Artstein, Z., Vitale, R.: A strong law of large numbers for random compact
  sets.
\newblock The Annals of Probability pp. 879--882 (1975)

\bibitem{aswani2011}
Aswani, A., Bickel, P., Tomlin, C.: Regression on manifolds: Estimation of the
  exterior derivative.
\newblock The Annals of Statistics pp. 48--81 (2011)

\bibitem{aswani2013}
Aswani, A., Gonzalez, H., Sastry, S., Tomlin, C.: Provably safe and robust
  learning-based model predictive control.
\newblock Automatica \textbf{49}(5), 1216--1226 (2013)

\bibitem{aswani2016_wl}
Aswani, A., Kaminsky, P., Mintz, Y., Flowers, E., Fukuoka, Y.: Behavioral
  modeling in weight loss interventions.
\newblock Available at SSRN: https://ssrn.com/abstract=2838443  (2016)

\bibitem{aswani2015}
Aswani, A., Shen, Z.J., Siddiq, A.: Inverse optimization with noisy data.
\newblock Operations Research  (2017).
\newblock Accepted

\bibitem{aumann1965}
Aumann, R.J.: Integrals of set-valued functions.
\newblock Journal of Mathematical Analysis and Applications \textbf{12}(1),
  1--12 (1965)

\bibitem{bahadur1971some}
Bahadur, R.R.: Some limit theorems in statistics.
\newblock SIAM (1971)

\bibitem{berge1963}
Berge, C.: Topological Spaces: Including a Treatment of Multi-valued Functions,
  Vector Spaces, and Convexity.
\newblock Courier Corporation (1963)

\bibitem{bertsimas2015}
Bertsimas, D., Gupta, V., Paschalidis, I.: Data-driven estimation in
  equilibrium using inverse optimization.
\newblock Mathematical Programming \textbf{153}(2), 595--633 (2015)

\bibitem{bickel1982}
Bickel, P.: On adaptive estimation.
\newblock The Annals of Statistics pp. 647--671 (1982)

\bibitem{bickel2006}
Bickel, P., Doksum, K.: Mathematical Statistics: Basic Ideas And Selected
  Topics, vol.~1, 2nd edn.
\newblock Pearson Prentice Hall (2006)

\bibitem{boucheron2013}
Boucheron, S., Lugosi, G., Massart, P.: Concentration Inequalities: A
  Nonasymptotic Theory of Independence.
\newblock OUP Oxford (2013)

\bibitem{castaing1967multi}
Castaing, C.: Sur les multi-applications mesurables.
\newblock Revue fran{\c{c}}aise d'informatique et de recherche
  op{\'e}rationnelle \textbf{1}(1), 91--126 (1967)

\bibitem{chan2014}
Chan, T., Craig, T., Lee, T., Sharpe, M.: Generalized inverse multiobjective
  optimization with application to cancer therapy.
\newblock Operations Research  (2014)

\bibitem{dempe2012}
Dempe, S., Dutta, J.: Is bilevel programming a special case of a mathematical
  program with complementarity constraints?
\newblock Mathematical programming \textbf{131}(1-2), 37--48 (2012)

\bibitem{devroye1980}
Devroye, L., Wise, G.: Detection of abnormal behavior via nonparametric
  estimation of the support.
\newblock SIAM Journal on Applied Mathematics \textbf{38}(3), 480--488 (1980)

\bibitem{dupacova1988}
Dupacov{\'a}, J., Wets, R.: Asymptotic behavior of statistical estimators and
  of optimal solutions of stochastic optimization problems.
\newblock The annals of statistics pp. 1517--1549 (1988)

\bibitem{esfahani2015}
Esfahani, P.M., Shafieezadeh-Abadeh, S., Hanasusanto, G.A., Kuhn, D.:
  Data-driven inverse optimization with incomplete information.
\newblock arXiv preprint arXiv:1512.05489  (2015)

\bibitem{geffroy1964}
Geffroy, J.: Sur un probleme d'estimation g{\'e}om{\'e}trique.
\newblock Publ. Inst. Statist. Univ. Paris \textbf{13}, 191--210 (1964)

\bibitem{geyer1994}
Geyer, C.J.: On the asymptotics of constrained {M}-estimation.
\newblock The Annals of Statistics pp. 1993--2010 (1994)

\bibitem{guntuboyina2012}
Guntuboyina, A.: Optimal rates of convergence for convex set estimation from
  support functions.
\newblock The Annals of Statistics \textbf{40}(1), 385--411 (2012)

\bibitem{keshavarz2011}
Keshavarz, A., Wang, Y., Boyd, S.: Imputing a convex objective function.
\newblock In: IEEE Multi-Conference on Systems and Control, pp. 613--619 (2011)

\bibitem{knight2000}
Knight, K., Fu, W.: Asymptotics for lasso-type estimators.
\newblock The Annals of Statistics pp. 1356--1378 (2000)

\bibitem{korostelev1995}
Korostel{\"e}v, A., Simar, L., Tsybakov, A.: Efficient estimation of monotone
  boundaries.
\newblock The Annals of Statistics pp. 476--489 (1995)

\bibitem{kudo1953}
Kudo, H.: Dependent experiments and sufficient statistics.
\newblock Natural Science Report. Ochanomizu University \textbf{4}, 905--927
  (1953)

\bibitem{lachout2005}
Lachout, P., Liebscher, E., Vogel, S.: Strong convergence of estimators as
  $\varepsilon$n-minimisers of optimisation problemsof optimisation problems.
\newblock Annals of the Institute of Statistical Mathematics \textbf{57}(2),
  291--313 (2005)

\bibitem{lovasz2006}
Lov{\'a}sz, L., Vempala, S.: Hit-and-run from a corner.
\newblock SIAM Journal on Computing \textbf{35}(4), 985--1005 (2006)

\bibitem{matheron1975}
Matheron, G.: Random sets and integral geometry.
\newblock John Wiley \& Sons (1975)

\bibitem{molchanov2006}
Molchanov, I.: Theory of random sets.
\newblock Springer Science \& Business Media (2006)

\bibitem{nesterov1994}
Nesterov, Y., Nemirovskii, A.: Interior-Point Polynomial Algorithms in Convex
  Programming.
\newblock SIAM (1994)

\bibitem{noda1976}
Noda, K.: Estimation of a regression function by the {Parzen} kernel-type
  density estimators.
\newblock Annals of the Institute of Statistical Mathematics \textbf{28}(1),
  221--234 (1976)

\bibitem{aswani2016}
Ouattara, A., Aswani, A.: Duality approach to bilevel programs with a convex
  lower level.
\newblock arXiv preprint arXiv:1608.03260  (2016)

\bibitem{outrata1990}
Outrata, J.: On the numerical solution of a class of {Stackelberg} problems.
\newblock Zeitschrift f{\"u}r Operations Research \textbf{34}(4), 255--277
  (1990)

\bibitem{patschkowski2016}
Patschkowski, T., Rohde, A., et~al.: Adaptation to lowest density regions with
  application to support recovery.
\newblock The Annals of Statistics \textbf{44}(1), 255--287 (2016)

\bibitem{renyi1963}
R{\'e}nyi, A., Sulanke, R.: {\"U}ber die konvexe h{\"u}lle von n zuf{\"a}llig
  gew{\"a}hlten punkten.
\newblock Zeitschrift f{\"u}r Wahrscheinlichkeitstheorie und verwandte Gebiete
  \textbf{2}(1), 75--84 (1963)

\bibitem{rockafellar2009}
Rockafellar, R., Wets, R.: Variational Analysis, 3rd edn.
\newblock Springer (2009)

\bibitem{royset2016b}
Royset, J.: Approximations and solution estimates in optimization  (2016)

\bibitem{royset2016}
Royset, J., Wets, R.: Multivariate epi-splines and evolving function
  identification problems.
\newblock Set-Valued and Variational Analysis \textbf{24}(4), 517--545 (2016)

\bibitem{royset2017variational}
Royset, J.O., Wets, R.J.B.: Variational theory for optimization under
  stochastic ambiguity.
\newblock SIAM Journal on Optimization \textbf{27}(2), 1118--1149 (2017)

\bibitem{salinetti1986}
Salinetti, G., Wets, R.: On the convergence in distribution of measurable
  multifunctions (random sets) normal integrands, stochastic processes and
  stochastic infima.
\newblock Mathematics of Operations Research \textbf{11}(3), 385--419 (1986)

\bibitem{schneider1993}
Schneider, R.: Convex bodies: the Brunn--Minkowski theory.
\newblock 151. Cambridge University Press (1993)

\bibitem{scholkopf2001}
Sch{\"o}lkopf, B., Platt, J., Shawe-Taylor, J., Smola, A., Williamson, R.:
  Estimating the support of a high-dimensional distribution.
\newblock Neural computation \textbf{13}(7), 1443--1471 (2001)

\bibitem{smith1984}
Smith, R.: Efficient {Monte Carlo} procedures for generating points uniformly
  distributed over bounded regions.
\newblock Operations Research \textbf{32}(6), 1296--1308 (1984)

\bibitem{van2000}
van~der Vaart, A.: Asymptotic Statistics.
\newblock Cambridge University Press (2000)

\bibitem{wald1949note}
Wald, A.: Note on the consistency of the maximum likelihood estimate.
\newblock The Annals of Mathematical Statistics \textbf{20}(4), 595--601 (1949)

\bibitem{wand1994}
Wand, M., Jones, M.: Kernel Smoothing.
\newblock Taylor \& Francis (1994)

\bibitem{weil1982}
Weil, W.: An application of the central limit theorem for {Banach}-space-valued
  random variables to the theory of random sets.
\newblock Probability Theory and Related Fields \textbf{60}(2), 203--208 (1982)

\bibitem{ye1995}
Ye, J., Zhu, D.: Optimality conditions for bilevel programming problems.
\newblock Optimization \textbf{33}(1), 9--27 (1995)

\end{thebibliography}

\end{document}